\documentclass{amsart}

\usepackage{amssymb,latexsym,amsmath,extarrows}
\usepackage{graphicx}

\newtheorem*{acknowledgement}{Acknowledgement}
\newtheorem{theorem}{Theorem}[section]
\newtheorem{lemma}[theorem]{Lemma}
\newtheorem{proposition}[theorem]{Proposition}
\newtheorem{remark}[theorem]{Remark}

\newtheorem{definition}[theorem]{Definition}

\newtheorem{conjecture}[theorem]{Conjecture}

%mathcalLIGRAPHIC

%GENERAL

\newcommand{\wh}{\widehat}

\newcommand{\ZR}{\mathbb{R}}

\newcommand{\bT}{{\bf T}}

\begin{document}

\title{ $L^p$-estimates of maximal function related to  Schr\"{o}dinger Equation in $\mathbb{R}^2$}
\author{Xiumin Du}
\address{Mathematics Department\\
University of Illinois at Urbana-Champaign\\
Urbana, IL 61801}
\email{xdu7@illinois.edu}
\author{Xiaochun Li}
\address{Mathematics Department\\
University of Illinois at Urbana-Champaign\\
Urbana, IL 61801}
\email{xcli@math.uiuc.edu}
\date{\today}
%\thanks{The first author was supported in part by a gift to the Mathematics Department at the University of Illinois from Gene H.~Golub.}

\begin{abstract}
Using Guth's polynomial partitioning method, we obtain $L^p$ estimates
for the maximal function associated to the solution of Schr\"odinger equation in $\mathbb R^2$.
The $L^p$ estimates can be used to
recover the previous best known result that $\lim_{t \to 0} e^{it\Delta}f(x)=f(x)$ almost
everywhere for all $f \in H^s (\mathbb{R}^2)$ provided that $s>3/8$.
\end{abstract}

\maketitle

\section{Introduction} \label{intro}
\setcounter{equation}0

The solution to the free Schr\"{o}dinger equation
\begin{equation}
  \begin{cases}
    iu_t - \Delta u = 0, &(x,t)\in \mathbb{R}^n \times \mathbb{R} \\
    u(x,0)=f(x), & x \in \mathbb{R}^n
  \end{cases}
\end{equation}
is given by
$$
  e^{it\Delta}f(x)=(2\pi)^{-n}\int e^{i\left(x\cdot\xi+t|\xi|^2\right)}\widehat{f}(\xi) \, d\xi.
$$
We use $B(c, r)$ to represent a ball centered at $c$ with radius $r$ in $\mathbb R^2$.
The main theorem in this article is the following:
\begin{theorem}\label{thm-3}
For $2\leq p \leq 3.2$, for any $\epsilon >0$, there exists a constant $C_\epsilon$ such that
\begin{equation}\label{max4}
 \big\|\underset{0<t\leq R}{\text{sup}}|e^{it\Delta}f|\big\|_{L^p(B(0,R))} \leq
C_\epsilon R^{\frac{2}{p}-\frac{5}{8}+\epsilon} \|f\|_2,
\end{equation}
 holds for all $R\geq 1$ and all $f$ with ${\rm supp}\widehat{f}\subset A(1)=\{\xi: |\xi|\sim 1\}$.
\end{theorem}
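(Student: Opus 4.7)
The plan is to adapt Guth's polynomial partitioning method, originally developed for the Fourier restriction problem, to the maximal function setting. The first step is to linearize the supremum: choose a measurable function $t:B(0,R)\to(0,R]$ that pointwise nearly attains $\sup_{0<t\leq R}|e^{it\Delta}f(x)|$, so it suffices to bound $\|e^{it(x)\Delta}f\|_{L^p(B(0,R))}$ uniformly in the choice of $t(\cdot)$. The problem then resembles a Fourier extension estimate for the paraboloid $\{(\xi,|\xi|^2):|\xi|\sim 1\}\subset\mathbb R^3$, restricted to the Lipschitz graph $\{(x,t(x))\}$. Next, I would perform a wave packet decomposition of $f$ at scale $R$, writing $f=\sum_{\theta,v} f_{\theta,v}$ where $\theta$ ranges over an $R^{-1/2}$-cap partition of $A(1)$ and $v$ over an $R^{1/2}$-grid. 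Each $e^{it\Delta}f_{\theta,v}$ is essentially concentrated on a tube $T_{\theta,v}\subset\mathbb R^2\times[0,R]$ of dimensions $R^{1/2}\times R^{1/2}\times R$ with direction determined by $\theta$.

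With this setup in place, I would apply Guth's polynomial partitioning in the $x$-plane: for a degree parameter $D$ (large depending on $\epsilon$), find a polynomial $P$ of degree $D$ whose zero set divides $B(0,R)$ into $\sim D^2$ cells carrying comparable shares of $\int_{B(0,R)}|e^{it(x)\Delta}f|^p$. After thickening $Z(P)$ into a wall $W$ of width $R^{1/2}$, the wave packets split into those whose projections into $B(0,R)$ lie deep inside a cell (cellular), lie close to $Z(P)$ with tangential direction (tangential), or cross $Z(P)$ transversally (transverse). By B\'ezout, any tube's projection meets at most $O(D)$ cells, so combining equi-distribution of $L^p$ mass with the induction hypothesis applied in each cell drives the cellular case and gives a clean gain at every scale.

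The transverse and tangential pieces are the technical heart of the argument. For transverse tubes I would invoke Tao's bilinear $L^{10/3}$ extension estimate for the paraboloid, which provides an improvement when pairs of wave packets have angularly separated directions; this is embedded in a Bourgain--Guth broad/narrow decomposition to absorb the non-transverse error. In the tangential case I would use that $Z(P)\subset\mathbb R^2$ is an algebraic curve of degree $D$, so tangent wave packets are organized along an essentially one-dimensional family; a refined $L^2$-orthogonality estimate together with an $\ell^2$-decoupling input for the paraboloid should then yield the required gain.

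The main obstacle I expect is the tangential/wall contribution. Unlike in the pure extension setting, the maximal function forces us to track how the graph $\{(x,t(x))\}$ sits relative to the semi-algebraic wall; because $t(x)$ is arbitrary, we cannot exploit transversality to a fixed hyperplane, and standard induction-on-scales must be upgraded to a two-parameter induction on $R$ and on the $L^2$-mass of $f$. Once this is carried out, optimizing $D$ and the broad/narrow threshold against the bilinear input should yield the exponent $2/p-5/8+\epsilon$ throughout $2\le p\le 16/5$; the $p=16/5$ endpoint then converts, via Littlewood--Paley, scaling, and a standard maximal-function-to-a.e.-convergence argument, into almost everywhere convergence of $e^{it\Delta}f$ to $f$ for $f\in H^s(\mathbb R^2)$, $s>3/8$, as advertised in the abstract.
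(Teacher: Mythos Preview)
Your outline has two genuine gaps. First, partitioning in the $x$-plane gives the wrong cellular numerology: a degree-$D$ polynomial yields $\sim D^2$ cells, each tube's projection meets $\leq D+1$ of them, pigeonhole gives a cell with $\|f_i\|_2^2\lesssim D^{-1}\|f\|_2^2$, and the induction step $D^{2}\cdot D^{-p/2}$ closes only for $p>4$, not $p=3.2$. The paper instead partitions in space-time $\mathbb{R}^2\times\mathbb{R}$, producing $\sim D^3$ cells while each tube still meets only $\leq D+1$; pigeonhole then gives $\|f_i\|_2^2\lesssim D^{-2}\|f\|_2^2$ and the step $D^{3}\cdot D^{-p}$ closes for $p>3$. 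To make three-dimensional partitioning compatible with the supremum in $t$ (your linearized measure lives on a $2$-dimensional graph in $\mathbb{R}^3$ and would always fall into the wall), the paper replaces $\sup_t$ by an $\ell^q$ norm over $K$-intervals for large $q$ and works with a $k$-broad functional $BL^p_{k,A}L^q$ that is sub-additive over disjoint space-time regions.

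Second, the Bourgain--Guth broad/narrow step you invoke relies on parabolic rescaling for the narrow part, and parabolic rescaling does \emph{not} preserve the mixed $L^p_xL^\infty_t$ norm once the Fourier support is recentered off the origin; the paper flags this obstacle explicitly in the introduction. Its fix is to carry an extra scale parameter $M$ for the Fourier-support radius $B(\xi_0,M^{-1})$, prove the broad estimate uniformly in $\xi_0$ and $M$, and run a separate induction in $M$ (Section~\ref{ktoreg}) to pass from the $2$-broad bound to the full maximal estimate. Relatedly, the algebraic case is not handled by Tao's bilinear $L^{10/3}$ theorem or decoupling as you propose: it is handled by a second layer of polynomial partitioning along the $2$-dimensional variety (part~(b) of the main proposition), whose own algebraic case reduces to a $1$-dimensional variety where the $2$-broad norm is negligible by definition.
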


\begin{remark}
The local bound (\ref{max4}) can be used to derive immediately global estimates in $L^{p}(\mathbb R^2)$
 for $\sup_{0<t\leq 1}|e^{it\Delta }f|$, following from Theorem 10 in \cite{Rogers}.
% provided  $f\in H^{\frac13+}$.
We are indebted to K. Rogers for pointing this out to us.
\end{remark}

An interesting and important problem in PDE
is to determine the optimal $s$, for which $\lim_{t \to 0}e^{it\Delta}f(x)=f(x)$ almost everywhere whenever
$f\in H^s(\mathbb{R}^n).$ This problem originates from  Carleson \cite{lC}, who proved convergence
for $s \geq 1/4$ when $n=1$. Dahlberg and Kenig \cite{DK} showed that the convergence does not hold for $s<1/4$
in any dimension.  Sj\"{o}lin \cite{pS} and Vega \cite{lV} proved independently the convergence for $s>1/2$ in all dimensions.
However, the pointwise convergence also holds when $s>s_0$ for some $s_0<1/2$. For instance, some
positive partial results were obtained by Bourgain \cite{jB}, Moyua-Vargas-Vega \cite{MVV}, and Tao-Vargas \cite{TV}.
Lee \cite{sL} used Tao-Wolff's bilinear restriction method to get $s>3/8$ for $n=2$.
Recently Bourgain \cite{jB12}, via Bourgain-Guth's multilinear restriction method,  proved that $s>1/2-1/(4n)$ is a sufficient condition
for the pointwise convergence when $n\geq 2$, and surprisely he also had shown that $s\geq1/2-1/n$ is a necessary condition for $n\geq4$.
In the two dimensinal case, Bourgain's result coincides with Lee's. An improved necessary condition for the pointwise
convergence in $\mathbb R^n$ with $n\geq 3$ is $s\geq 1/2-1/(n+2)$ due to Luc\'a and Rogers \cite{L-R}. Most recently Bourgain \cite{jB16} proved a new necessary condition, that is, $s\geq \frac {1}{2} -\frac{1}{2(n+1)}$ is required for pointwise convergence in $\ZR^n$ with $n\geq 2$. \\

Theroem \ref{thm-3} can be used to recover the following Bourgain-Lee's pointwise convergence result in two
dimensional case.

\begin{theorem}\label{Thm:PointConv}
 For every  $f\in H^s(\mathbb R^2)$ with $s>3/8$, $\lim_{t \to 0}e^{it\Delta}f(x)=f(x)$ almost everywhere.
\end{theorem}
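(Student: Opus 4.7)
The plan is to derive Theorem \ref{Thm:PointConv} from Theorem \ref{thm-3} by the standard reduction of almost-everywhere convergence to an $L^p$ maximal estimate. Specifically, it suffices to establish
\begin{equation}\label{goal-max}
\big\|\sup_{0<t\leq 1}|e^{it\Delta}f|\big\|_{L^p(B(0,1))} \leq C_s \|f\|_{H^s(\mathbb R^2)}
\end{equation}
for every $s>3/8$ and some $p$. Once \eqref{goal-max} is in hand, the usual mechanism takes over: $e^{it\Delta}g\to g$ pointwise for every $g\in\mathcal S(\mathbb R^2)$; $\mathcal S(\mathbb R^2)$ is dense in $H^s(\mathbb R^2)$; and Chebyshev's inequality applied to $\sup_{t}|e^{it\Delta}(f-g)|+|f-g|$ together with \eqref{goal-max} shows that $\{x\in B(0,1):\limsup_{t\to 0}|e^{it\Delta}f(x)-f(x)|>\lambda\}$ has measure at most $C\lambda^{-p}\|f-g\|_{H^s}^p$ for any Schwartz approximant $g$; passing $g\to f$ in $H^s$ kills this exceptional set. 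Translation invariance of $e^{it\Delta}$ then promotes a.e.\ convergence on $B(0,1)$ to a.e.\ convergence on $\mathbb R^2$.

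To prove \eqref{goal-max}, I would Littlewood--Paley decompose $f=\sum_{k\geq 0}f_k$, with $\widehat{f_k}$ supported in $\{|\xi|\sim 2^k\}$ for $k\geq 1$ and $\widehat{f_0}$ in $\{|\xi|\leq 2\}$. The low-frequency piece is harmless: $|e^{it\Delta}f_0|\leq (2\pi)^{-2}\|\widehat{f_0}\|_1\leq C\|f\|_2$ pointwise. For each $k\geq 1$, apply parabolic rescaling: define $g_k$ by $\widehat{g_k}(\eta):=\widehat{f_k}(2^k\eta)$, so that $\widehat{g_k}$ is supported in $A(1)=\{|\eta|\sim 1\}$, $\|g_k\|_2=2^{-k}\|f_k\|_2$, and the scaling identity
\[
 e^{is\Delta}f_k(y)=2^{2k}\,e^{i(2^{2k}s)\Delta}g_k(2^ky)
\]
holds. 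Under the substitutions $t=2^{2k}s$ and $x=2^ky$, taking the supremum over $s\in(0,1]$ and integrating over $y\in B(0,1)$ becomes a supremum over $t\in(0,2^{2k}]$ and integration over $x\in B(0,2^k)\subset B(0,2^{2k})$, which is exactly the geometry of Theorem \ref{thm-3} at scale $R=2^{2k}$.

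Applying Theorem \ref{thm-3} to $g_k$ at $R=2^{2k}$ and tracking the Jacobians yields, after bookkeeping,
\[
\big\|\sup_{0<s\leq 1}|e^{is\Delta}f_k|\big\|_{L^p(B(0,1))}\leq C_\e\, 2^{k\left(\frac{2}{p}-\frac14+2\e\right)}\|f_k\|_2.
\]
At the endpoint $p=16/5=3.2$ the exponent reduces to $\frac38+2\e$, and this is the exact reason Theorem \ref{thm-3} was targeted at $p=3.2$. Summing in $k$ via the triangle inequality and Cauchy--Schwarz against $\|f\|_{H^s}^2\sim\sum_k 2^{2ks}\|f_k\|_2^2$,
\[
\sum_{k\geq 1}2^{k(\frac38+2\e)}\|f_k\|_2\leq \Bigl(\sum_{k\geq 1}2^{-2k(s-\frac38-2\e)}\Bigr)^{1/2}\Bigl(\sum_k 2^{2ks}\|f_k\|_2^2\Bigr)^{1/2},
\]
which is finite as soon as $\e<(s-3/8)/2$. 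For any given $s>3/8$ such a choice exists, so \eqref{goal-max} holds with $p=3.2$ and the theorem follows.

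The entire deduction is routine once Theorem \ref{thm-3} is granted: Littlewood--Paley, parabolic rescaling, and Cauchy--Schwarz. The only subtlety is matching the scaling gain $2^{k(2/p-1/4)}$ produced by Theorem \ref{thm-3} against the $2^{-ks}$ decay built into the $H^s$ norm—these balance precisely at $s=3/8$ when $p=3.2$, which is why the endpoint in Theorem \ref{thm-3} is sharp for this application. Thus the essential difficulty is concentrated entirely in Theorem \ref{thm-3} itself, not in its deduction of Theorem \ref{Thm:PointConv}.
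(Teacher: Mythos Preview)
Your argument is correct and follows the same overall scheme as the paper: reduce a.e.\ convergence to a local $L^p$ maximal bound, Littlewood--Paley decompose, parabolically rescale each frequency piece, and invoke Theorem~\ref{thm-3}. The bookkeeping you record (in particular the exponent $2/p-1/4+2\e$, which at $p=16/5$ gives $3/8+2\e$) is accurate.

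There is one small but genuine difference worth flagging. The paper's reduction passes through an intermediate step attributed to Lee: one first reduces the time interval from $(0,1]$ to $(0,1/R]$ via a wave-packet argument, and only then rescales, landing in Theorem~\ref{thm-3} at scale $R=2^k$. You instead rescale $(0,1]$ directly, which sends the time variable to $(0,2^{2k}]$ and the spatial ball to $B(0,2^k)\subset B(0,2^{2k})$, and then apply Theorem~\ref{thm-3} at the larger scale $R=2^{2k}$. This sidesteps Lee's reduction entirely at the cost of throwing away some of the spatial domain in Theorem~\ref{thm-3}; the waste is harmless because it is absorbed into the $\e$-loss. So your route is slightly more elementary, while the paper's route (via Lee) would be the one to use if Theorem~\ref{thm-3} were only available with the tighter space--time balance $B(0,R)\times(0,R]$ matched to the frequency scale.
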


To see why this is true. First,
it is routine and standard that Theorem \ref{Thm:PointConv} is a consequence of the boundedness of the associated maximal
function, i.e. for some $p >0$,
\begin{equation}\label{max0}
  \big\|\underset{0<t\leq 1}{\text{sup}}|e^{it\Delta}f|\big\|_{L^p(B(0,1))} \leq C\|f\|_{H^s},
\end{equation}
holds for all $f\in H^s(\mathbb{R}^2)$. From the definition of $H^s$ space, it is
clear that (\ref{max0}) can be reduced to show that there exists some $p >0$ such that for any $R\geq 1$ and any $\epsilon >0$,
\begin{equation}\label{max1}
  \big\|\underset{0<t\leq 1}{\text{sup}}|e^{it\Delta}f|\big\|_{L^p(B(0,1))} \leq C_\epsilon R^{s+\epsilon} \|f\|_2,
 \end{equation}
holds for every $L^2$ function $f$ whose Fourier transform is supported in
$A(R)=\{\xi:|\xi|\sim R\}$. Here the constant $C_\epsilon$ is independent of $R$ and $f$.
For $p\geq 2$, it was observed by S. Lee in \cite{sL}, via a use of wave packets decomposition,
that in order to prove (\ref{max1}), it suffices to show that for any $R\geq 1$ and any $\epsilon >0$,
\begin{equation}\label{max2}
  \big\|\underset{0<t\leq 1/R}{\text{sup}}|e^{it\Delta}f|\big\|_{L^p(B(0,1))} \leq C_\epsilon R^{s+\epsilon}
\|f\|_2, \quad \forall f \,\, {\rm with}\,\, \text{supp}\,\widehat{f}\subseteq A(R)\,.
\end{equation}
 By a parabolic rescaling,  (\ref{max2}) is equivalent to
\begin{equation}\label{eq:goal}
  \big\|\underset{0<t\leq R}{\text{sup}}|e^{it\Delta}f|\big\|_{L^p(B(0,R))} \leq
C_\epsilon R^{s-1+\frac{2}{p}+\epsilon} \|f\|_2,
\end{equation}
for any $f$ with  $\text{supp}\,\widehat{f}\subseteq A(1)=\{\xi:|\xi|\sim 1\}$.\\
Because of the equivalence of (\ref{max2}) and (\ref{eq:goal}), it is clear that Theorem \ref{thm-3} implies
Theorem  \ref{Thm:PointConv}.\\

It is natural to expect the following conjecture would be true.
\begin{conjecture}
 \begin{equation}\label{max}
 \big\|\underset{0<t\leq R}{\text{sup}}|e^{it\Delta}f|\big\|_{L^p(B(0,R))} \leq
C_\epsilon R^{\epsilon} \|f\|_2,
\end{equation}
 holds for any $p\geq 3$, all $R\geq 1$ and all $f$ with ${\rm supp}\widehat{f}\subset A(1)$
\end{conjecture}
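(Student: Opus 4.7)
The plan is to first reduce the conjecture to the single critical exponent $p=3$. For $p\geq 3$, one can interpolate between an $L^3$ bound with growth $R^\epsilon$ and the trivial $L^\infty$ estimate $\|\sup_{t}|e^{it\Delta}f|\|_\infty \lesssim \|f\|_1 \lesssim \|\widehat{f}\|_\infty$, localized via the compact Fourier support in $A(1)$ together with Bernstein-type bounds, to absorb the higher-$p$ cases (for $p>3$ the right-hand side $R^\epsilon$ is consistent with Sobolev embedding once one controls the $L^3$ bound). Thus the whole content of the conjecture concentrates on proving
\[
\Big\|\sup_{0<t\leq R}|e^{it\Delta}f|\Big\|_{L^3(B(0,R))} \leq C_\epsilon R^\epsilon \|f\|_2
\]
for all $f$ with $\mathrm{supp}\,\widehat{f}\subset A(1)$.

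For the $L^3$ endpoint I would follow Guth's polynomial partitioning scheme, exactly in the spirit of Theorem \ref{thm-3}, but pushed to the sharp exponent. First, discretize time by choosing for each $x$ a measurable $t(x)\in(0,R]$ realizing (up to constants) the supremum, so that the left-hand side becomes $\|e^{it(x)\Delta}f\|_{L^3(B(0,R))}$. Next, perform the standard wave packet decomposition at scale $R$: $f=\sum_T f_T$ where each $T$ is a tube of dimensions $R^{1/2}\times R$ in the spacetime extension. Apply polynomial partitioning of degree $D=D(\epsilon)$ to the measure $|e^{it(x)\Delta}f|^3\,dx$ on $B(0,R)$, producing $\sim D^2$ cells on which one runs induction on the radius, plus a wall (algebraic variety) of low dimension. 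On the cellular contribution one gains from the $L^2$ orthogonality of the wave packets entering few cells; this is the standard mechanism by which polynomial partitioning beats the trivial count.

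The wall contribution is the crucial step and the place where sharp control is required. I would split the wall term into a \emph{transverse} part (tubes making a large angle with the variety) and a \emph{tangential} part (tubes lying in an $R^{1/2+\delta}$-neighborhood of the variety). The transverse contribution is handled by a bilinear/broad-narrow decomposition in the spirit of Bourgain--Guth, exploiting a bilinear Strichartz-type estimate in $\mathbb R^2$ together with induction on $R$ to pick up only $R^\epsilon$ losses. The tangential contribution, restricted to a codimension-$1$ slab, is dealt with by iterating polynomial partitioning inside the slab at a smaller scale and invoking a \emph{refined Strichartz estimate} that records the number of distinct wave-packet directions passing through each small ball, together with the $\ell^2$-decoupling theorem of Bourgain--Demeter to convert this directional information into the desired $L^3$ bound.

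The hard step, and the main obstacle, is this last tangential/refined-Strichartz piece: one must close the induction so that the combined loss from the cellular, transverse, and tangential contributions is $R^\epsilon$ rather than a genuine power of $R$. This requires an incidence-geometric input controlling how many $R^{1/2}$-tubes can be simultaneously tangent to a low-degree algebraic variety in $\mathbb R^3$ while clustering through small spatial balls, which in $\mathbb R^2$ (spacetime $\mathbb R^3$) is precisely the regime where current techniques are known to succeed at $p=3$. Once this step is in place, the rest of the argument assembles by the now-standard induction-on-scales bookkeeping used for Theorem \ref{thm-3}.
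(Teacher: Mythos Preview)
This statement is labeled a \emph{Conjecture} in the paper and is not proved there. The paper establishes only Theorem~\ref{thm-3}, with exponent $R^{2/p-5/8+\epsilon}$; this matches $R^\epsilon$ at $p=3.2$ but leaves a genuine power loss $R^{1/24}$ at $p=3$, and closing that gap is exactly what the paper does not do.

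Your outline has two concrete problems. First, you partition the linearized measure on $B(0,R)\subset\mathbb R^2$, obtaining $\sim D^2$ cells. With $\sim D^2$ cells and each tube meeting $\lesssim D$ of them, the cellular induction produces the factor $D^{2-p/2}$, which closes only for $p>4$; this is why the paper instead introduces the $BL^p_{k,A}L^q$ norm and partitions in spacetime $\mathbb R^2\times\mathbb R$, getting $\sim D^3$ cells and the factor $D^{3-p}$ (see the cellular case in the proof of Proposition~\ref{goal}(c)). Second, and more seriously, you dispose of the tangential term by invoking a ``refined Strichartz estimate'' and $\ell^2$-decoupling, neither of which is proved or used in this paper. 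In the paper's argument the tangential contribution is controlled by Proposition~\ref{goal}(b), which itself requires $p>4$ on the $2$-variety; after the H\"older step \eqref{holder} this is precisely what fixes the endpoint at $3.2$ rather than $3$. Your assertion that ``current techniques are known to succeed at $p=3$'' is therefore not true of the techniques in this paper, and naming tools from later work does not constitute a proof.
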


 In \cite{lG} and \cite{lG16}, Guth applied the idea of polynomial partitioning from incidence geometry to
 restriction estimates. The proof of Theorem \ref{thm-3} is based on Guth's polynomial partitioning method developed in \cite{lG} and \cite{lG16}.

For estimates of $\|e^{it\Delta}f\|_{L^p_{x,t}(B(0,R)\times [0,R])} $, we can use parabolic rescaling to reduce a linear estimate to a bilinear one. But for the $L^pL^\infty$-norm, or more generally mixed $L^pL^q$-norm, the parabolic rescaling does not work well when the Fourier support is not centered at $0$. The rescaling would change the mixed norm if the location of the Fourier support was kept in the unit ball, otherwise it would change the location of the Fourier support if the mixed norm was kept. To deal with this issue, we take the size of the Fourier support into consideration and do induction on different scales. But the smaller size of the Fourier support would cause poor separability between the two terms in bilinear estimate. So we use the following $k$-broadness concept -- $BL_{k,A}^pL^\infty$ , which is motivated by Guth \cite{lG16}. Here is the setup.

Consider functions $f$ with Fourier support $B(\xi_0,M^{-1})$, where $\xi_0 \in B(0,1)$ and $M\geq 1$, we decompose $B(\xi_0, M^{-1})$ into balls $\tau$ of radius $(KM)^{-1}$, where $K$ is a large constant. We have that $f=\sum_\tau f_\tau$, where $\widehat{f_\tau}=\widehat{f}|_\tau$. Denote $G(\tau):=\{G(\xi)\, | \,\xi\in\tau\}$, where
\[
  G(\xi):=\frac{(-2\xi,1)}{|(-2\xi,1)|}\in S^2.
\]
The set of directions $G(\tau)\subset S^2$ is a spherical cap with radius $\sim (KM)^{-1}$. If $V\subset \ZR^2 \times \ZR$ is a subspace, then we write $\text{Angle}(G(\tau),V)$ for the smallest angle between any non-zero vectors $v\in V$ and $v'\in G(\tau)$.

Next we decompose $B(0,R)$ into balls $B_K$ of radius $K$, and decompose $[0,R]$ into intervals $I_K$ of length K. For a parameter $A$, we define
\begin{equation}\label{mu}
  \mu_{e^{it\Delta}f}(B_K\times I_K):= \underset{V_1, \cdots, V_A}{\text{min}}\left( \underset{ \tau \notin V_a\,\text{for all} \, a }{\text{max}} \int_{B_K\times I_K} |e^{it\Delta}f_\tau(x)|^p\right)
\end{equation}
where $V_1,\cdots, V_A$ are $(k-1)$-subspaces of $\ZR^3$, and $\tau \notin V_a$ means that $\text{Angle}(G(\tau),V_a)>(KM)^{-1}$. Next for any subset $U\subset B^*_R := B(0,R)\times [0,R]$, we define
\begin{equation}\label{infty}
  \|e^{it\Delta}f\|_{BL_{k,A}^p L^\infty (U)}^p :=\sum_{B_k\subset B(0,R)}\underset{I_K\subset[0,R]}{\text{max}} \frac{|U\cap (B_K\times I_K)|}{|B_K\times I_K|}\mu_{e^{it\Delta}f}(B_K\times I_K).
\end{equation}

 In Section \ref{ktoreg}, we'll show that Theorem \ref{thm-3} follows  from the following theorem:
\begin{theorem}\label{klinfty}
Fix $k=2$. For $1\leq p \leq 3.2$, for any $\epsilon >0$, there exists a large constant $A$ and a constant $C(\epsilon,K)$ such that
\begin{equation}\label{eqklinfty}
 \|e^{it\Delta}f\|_{BL_{k,A}^p L^\infty (B_R^*)} \leq
C(\epsilon,K) R^{\frac{2}{p}-\frac{5}{8}+\epsilon} \|f\|_2,
\end{equation}
 holds for $\forall R\geq 1$, $\forall \xi_0 \in B(0,1), \forall M\geq 1$ and  $\forall f$ with ${\rm supp}\widehat{f}\subset B(\xi_0, M^{-1})$.
\end{theorem}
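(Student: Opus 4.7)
The plan is to prove Theorem~\ref{klinfty} by a double induction on the spatial scale $R$ and on the Fourier scale $M$, implemented via Guth's polynomial partitioning in the style of \cite{lG16}, applied directly to the broad mixed norm $BL^p_{k,A}L^\infty$. The second parameter $M$ is forced by the observation made in the introduction: the $L^pL^\infty$ norm cannot be simultaneously rescaled and kept Fourier-supported in $B(0,1)$, so $M^{-1}$ must enter as an independent inductive parameter. The inductive hypothesis will assert that (\ref{eqklinfty}) holds at every $R'<R$ with any $M'\geq 1$, and at scale $R$ for every $M'>M$.

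First I carry out the wave packet decomposition at Fourier scale $M$: partition $B(\xi_0,M^{-1})$ into caps $\tau$ of radius $(KM)^{-1}$, write $f=\sum_\tau f_\tau$, and view each $e^{it\De}f_\tau$ on $B^*_R$ as concentrated on tubes $T$ of direction $G(\tau)$, with cross section and length dictated by parabolic scaling. Next I apply polynomial partitioning in $\ZR^3$: choose a polynomial $P$ of degree $D\sim R^\de$ with $\de$ small so that $B^*_R\setminus Z(P)$ splits into $\sim D^3$ cells $O_j$, each carrying a $D^{-3}$ share of $\|e^{it\De}f\|^p_{BL^p_{k,A}L^\infty(B^*_R)}$. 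By B\'ezout each tube meets $O(D)$ cells, and the tubes that fail this bound must be trapped in the thin neighborhood $W = N_{R^{1/2+\de}}(Z(P))$.

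This produces the standard trichotomy. In the \emph{cellular case} the cell-contributions dominate: in each $O_j$ I invoke the induction hypothesis at a strictly smaller spatial scale, then recombine using $L^2$-orthogonality of the localized pieces $f_j$ and the B\'ezout bound, absorbing the $D$-loss into $R^\ee$. In the \emph{algebraic/tangent case} the contribution from $W$ dominates; I cover $W$ by balls of radius $\rho\in[R^{1/2},R]$ and observe that the contributing tubes in each $\rho$-ball are essentially tangent to the $2$-dimensional variety $Z(P)$. Rescaling such a $\rho$-ball to unit size compresses the Fourier support to scale $\sim (KM(R/\rho)^{1/2})^{-1}$, strictly smaller than $(KM)^{-1}$, so this case is closed by invoking the induction on the $M$-parameter. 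In the \emph{broad/transverse case} (for $k=2$), the broadness condition $\mathrm{Angle}(G(\tau),V_a)>(KM)^{-1}$ produces two effective caps whose directions are $(KM)^{-1}$-transverse in $S^2$, and a bilinear restriction estimate for the paraboloid in $\ZR^3$, valid up to $p\leq 10/3$, covers the full range $p\leq 3.2$ claimed in Theorem~\ref{klinfty}.

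The main obstacle is the algebraic/tangent case. Standard parabolic rescaling is incompatible with the $L^pL^\infty$ mixed norm — either one keeps the Fourier support centered in $B(0,1)$ and destroys the norm, or one preserves the norm and the Fourier support shifts off the origin. The resolution is precisely the introduction of $M$ as an inductive parameter: after rescaling a tangent piece one lands in a problem with strictly smaller Fourier support but with the correct mixed norm, which is closed by induction on $M$ rather than on $R$. Balancing the cellular gain against the bilinear transverse estimate fixes the critical exponent $p=3.2$ and produces the exponent $\f{2}{p}-\f{5}{8}+\ee$, which after the reductions (\ref{max2})--(\ref{eq:goal}) corresponds exactly to the Bourgain--Lee threshold $s>3/8$.
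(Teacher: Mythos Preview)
Your plan contains two genuine gaps that together prevent the argument from closing.

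First, the ``standard trichotomy'' you describe is not the structure of Guth's polynomial partitioning as applied to the broad norm. Polynomial partitioning produces only a \emph{dichotomy}: cellular versus algebraic (wall). There is no separate ``broad/transverse'' case in which one invokes a bilinear restriction theorem as a black box. The broadness condition is already baked into the quantity $\|e^{it\Delta}f\|_{BL^p_{k,A}L^\infty}$; it is what is being partitioned, not a residual term to be estimated separately. In the paper the algebraic case is subdivided into tubes \emph{tangential} to the variety and tubes \emph{transverse} to the variety (in the geometric sense of angle with $T_zZ$). The transverse-to-variety part is closed by induction on the radius using the equidistribution estimate $\sum_j\|f_{j,\mathrm{trans}}\|_2^2\lesssim_D\|f\|_2^2$, not by bilinear restriction. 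No bilinear or multilinear restriction theorem is used anywhere in the proof.

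Second, and more seriously, your handling of the tangential case by ``rescaling a $\rho$-ball to unit size'' to compress the Fourier support and then inducting on $M$ is exactly what the paper explains does \emph{not} work: parabolic rescaling of a piece with Fourier support in $B(\xi_0,M^{-1})$, $\xi_0\neq 0$, either moves the center outside $B(0,1)$ or changes the $L^pL^\infty$ structure. Tangency of the wave packets to a $2$-variety restricts the \emph{directions} $G(\theta)$ to a thin neighborhood of a plane, but it does not shrink the Fourier support ball $B(\xi_0,M^{-1})$, so there is no honest gain in $M$ to induct on. The paper's resolution is different: one inducts on the \emph{dimension of the variety}. The tangential part at dimension $m=2$ is fed into an estimate for functions concentrated on a $2$-dimensional transverse complete intersection (part (b) of Proposition~\ref{goal}, with exponent $p>4$), whose own tangential case reduces to a $1$-dimensional variety (part (a)). At that bottom level the parameter $M$ finally plays its role, but not via rescaling: when $KM<R^{1/2-O(\delta_0)}$, every cap $\tau$ whose tubes are tangent to a $1$-dimensional $Z$ satisfies $\mathrm{Angle}(G(\tau),T_{z_0}Z)\leq(KM)^{-1}$, so by the definition of $\mu_{e^{it\Delta}f}$ with $k=2$ these caps are simply excluded from the broad norm and the contribution is $\mathrm{RapDec}(R)\|f\|_2$. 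Balancing the cellular gain $D^{3-p}$ (at $m=3$) and $D^{2-p/2}$ (at $m=2$) against H\"older from the $m=2$ estimate at $r=4+$ is what produces the endpoint $p=3.2$ and the exponent $\tfrac{2}{p}-\tfrac{5}{8}$.
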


 In order to apply polynomial partitioning method, we approximate the maximum with respect to $t$ by $l^q$-norm. We define
\begin{equation}\label{q}
  \|e^{it\Delta}f\|_{BL_{k,A}^p L^q (U)}^p :=\sum_{B_k\subset B(0,R)}\left[\sum_{I_K\subset[0,R]} \left(\frac{|U\cap (B_K\times I_K)|}{|B_K\times I_K|}\mu_{e^{it\Delta}f}(B_K\times I_K)\right)^q\right]^{1/q}.
\end{equation}
Note that we have
\[
   \|e^{it\Delta}f\|_{BL_{k,A}^p L^\infty (B^*_R)}=
  \lim_{q\to \infty}\|e^{it\Delta}f\|_{BL_{k,A}^p L^q (B^*_R)} .
\]
 For later reference, for each $\epsilon>0$, we choose small parameters $0<\delta \ll \delta_2 \ll \delta_1 \ll \epsilon.$
 To prove Theorem \ref{klinfty}, it is enough to prove the following theorem:
 \begin{theorem} \label{klq}
  Fix $k=2$. For $1\leq p \leq 3.2$, for any $\epsilon >0$, there exists a large constant $A$ and a constant $C(\epsilon,K)$ such that for any $ q>\delta^{-1}$,
\begin{equation}\label{}
 \|e^{it\Delta}f\|_{BL_{k,A}^p L^q (B_R^*)} \leq
C(\epsilon,K) R^{\frac{2}{p}-\frac{5}{8}+\epsilon} \|f\|_2,
\end{equation}
 holds for $\forall R\geq 1$, $\forall \xi_0 \in B(0,1), \forall M\geq 1$ and  $\forall f$ with ${\rm supp}\widehat{f}\subset B(\xi_0, M^{-1})$.
 \end{theorem}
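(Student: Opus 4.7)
The proof will proceed by Guth's polynomial partitioning scheme (as developed in \cite{lG} and \cite{lG16}), running an induction on the physical radius $R$ while keeping the auxiliary Fourier-support parameter $M$ as a live parameter. Fix a finite $q>\delta^{-1}$. We show that if the estimate (\ref{q}) holds at every radius $R'\leq R/2$ with the stated constant, then it holds at radius $R$. The base case $R\leq K^{O(1)}$ is handled by Cauchy--Schwarz and Plancherel inside each box $B_K\times I_K$, producing only a harmless loss absorbable in $C(\epsilon,K)$. Throughout, decompose $f=\sum_\tau f_\tau$ using the $(KM)^{-1}$-caps, and further decompose each $f_\tau$ into wave packets associated to tubes of dimensions $\sim R^{1/2}\times R^{1/2}\times R$ in $B_R^*$ whose directions are given by $G(\tau)$.

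\textbf{Partitioning and cellular case.} For each dyadic value of the contribution, apply Guth's polynomial partitioning theorem in $\ZR^3$ to produce a polynomial $P$ of degree $D=D(\epsilon)$ whose zero set $Z(P)$ partitions $B_R^*$ into $\sim D^3$ open cells $\mathcal O_i$ on which the contribution to (\ref{q}) is balanced up to constants. Let $W$ be the $R^{1/2+\delta_1}$-neighborhood of $Z(P)$, corresponding to the wave-packet thickness. In the \emph{cellular} subcase, where the contribution from $\bigcup_i(\mathcal O_i\setminus W)$ dominates, each wave-packet tube meets at most $D+1$ cells by a Bezout-type bound, so $\sum_i\|f_{\mathcal O_i}\|_2^2\lesssim D\|f\|_2^2$ and the average cell receives only $\sim D^{-2}\|f\|_2^2$ of $L^2$-mass. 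Applying the inductive hypothesis at a smaller effective radius on each cell and summing with $p\geq 1$, the polynomial-in-$D$ gain from the mass redistribution is arranged to beat the $D^{O(1)}$ loss from the count of cells, provided $D$ is chosen appropriately in terms of $\epsilon$.

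\textbf{Algebraic case.} In the opposite subcase the mass is concentrated on $W$. Split the wave-packet tubes into those \emph{transverse} to $Z(P)$ (angle $\gtrsim R^{-1/2+\delta_2}$ with the tangent plane) and those \emph{tangent} to $Z(P)$. A transverse tube meets $W$ in $\lesssim D$ controlled pieces of length $\sim R^{1/2+\delta_1}$, so induction at the smaller effective scale $R^{1/2+\delta_1}$ on each piece handles this sub-subcase. Tangent tubes are confined to a thin neighborhood of the $2$-dimensional variety $Z(P)$; here we invoke the $k=2$ broadness assumption, which ensures that among caps $\tau$ contributing significantly to $\mu_{e^{it\Delta}f}$, one can find at least two whose directions $G(\tau)$ make a definite angle with any given line, yielding a genuine bilinear/transverse configuration. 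On this configuration, Tao's bilinear Schr\"odinger estimate in $\ZR^2$, which gives $L^{10/3}$ control for transverse pairs, closes the estimate precisely in the range $p\leq 3.2<10/3$.

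\textbf{Main obstacle.} The core difficulty, which forces the $BL_{k,A}^p L^q$ machinery in the first place, is the one flagged in the introduction: parabolic rescaling does not preserve the mixed $L^pL^q$ norm when $\widehat f$ is supported in $B(\xi_0,M^{-1})$ with $\xi_0\neq 0$, because the Galilean shear $(x,t)\mapsto(x-2t\xi_0,t)$ needed to recenter the Fourier support distorts the box geometry of $B_K\times I_K$. One therefore cannot, after partitioning, simply rescale back to unit Fourier support as in the $L^p_{x,t}$ setting; instead, $M$ must be carried through the induction with uniform constants. The $k$-broad formulation is designed precisely to replace the usual bilinear reduction (which would require such rescaling) with a transverse $k$-broad gain that is compatible with the mixed-norm induction. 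Making this bookkeeping close cleanly, especially in the tangential sub-subcase, is where I expect the main technical work to lie.
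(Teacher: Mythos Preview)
Your overall framework is right, but there is a genuine gap in the tangential sub-case and an error in the transverse one. For the transverse algebraic part you propose ``induction at the smaller effective scale $R^{1/2+\delta_1}$''; this is far too lossy, since summing over $\sim R^{3/2}$ pieces at that scale costs an uncontrolled power of $R$. The paper instead covers $B_R^*$ by balls $B_j$ of radius $\rho=R^{1-O(\delta_2)}$, only slightly less than $R$; a transverse tube lies in $\bT_{j,{\rm trans}}$ for at most ${\rm Poly}(D)$ values of $j$ (Lemma~\ref{transverse}), so the total $L^2$ mass is essentially preserved and the small gain $(R/\rho)^{\epsilon}$ from the inductive exponent absorbs the ${\rm Poly}(D)$ and $A\mapsto A/2$ losses.

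The deeper gap is your tangential sub-case. You invoke Tao's bilinear $L^{10/3}_{x,t}$ estimate, but that controls an $L^p_{x,t}$ norm, and you offer no mechanism to pass to the mixed $BL^p_{k,A}L^q$ norm---precisely the obstruction you flag in your last paragraph but never resolve. The paper does \emph{not} use bilinear restriction here. Instead it establishes an auxiliary estimate (Proposition~\ref{goal}(b)) for functions whose wave packets are tangent to a fixed $2$-dimensional transverse complete intersection $Z$, valid for $r>4$ with bound $R^{\frac{1}{2r}-\frac14+\epsilon}\|f\|_2$. That estimate is itself proved by a \emph{second} round of polynomial partitioning, now along the $2$-variety $Z$ (hence $\sim D^2$ cells; the cellular case closes exactly when $r>4$), whose own algebraic tangential case lands on a $1$-dimensional variety where the $k=2$ broadness forces the contribution to be ${\rm RapDec}(R)$ (Proposition~\ref{goal}(a)). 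The endpoint $p=3.2=16/5$ then comes from combining part~(b) at $r=4^+$ with the H\"older inequality \eqref{holder}, not from $10/3$. The missing idea in your proposal is this nested induction on the dimension of the variety.
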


 \section{$2$-Broad Maximal Estimate Implies Regular Maximal Estimate}\label{ktoreg}
 \setcounter{equation}0

 In this section, we assume that Theorem \ref{klinfty} holds and we prove Theorem \ref{thm-3}. Fix $k=2$. We consider functions $f$ with ${\rm supp}\widehat{f}\subset B(\xi_0,M^{-1})$ with arbitrary $\xi_0\in B(0,1)$ and $M\geq 1$. Because ${\rm supp}\widehat{f}\subset B(\xi_0,M^{-1})\subset \ZR^2$, we have that
 \[
   \|e^{it\Delta}f_\tau\|_\infty \lesssim M^{-1}\|f\|_2,
 \]
 by interpolating this $L^\infty$ extimate with \eqref{eqklinfty}, we get
 \begin{equation}\label{eqklinfty2}
 \|e^{it\Delta}f\|_{BL_{k,A}^p L^\infty (B_R^*)} \lesssim_{K,\epsilon}
M^{-\epsilon^2}R^{\frac{2}{p}-\frac{5}{8}+\epsilon} \|f\|_2,
\end{equation}
for $1\leq p\leq 3.2$. For the rest of the argument, we fix $2\leq p\leq 3.2$.

Let $\beta$ be the best constant satisfying
\begin{equation} \label{beta}
  \|e^{it\Delta}f\|_{L^p L^\infty (B_R^*)} \lesssim_\epsilon
M^{-\epsilon^2}R^{\beta+\epsilon} \|f\|_2,
\end{equation}
for all functions $f$ with ${\rm supp}\widehat{f}\subset B(\xi_0,M^{-1})$ with arbitrary $\xi_0\in B(0,1)$ and $M\geq 1$.

We write $ \|e^{it\Delta}f\|_{L^p L^\infty (B_R^*)}^p$ as
\[
  \sum_{B_K\subset B(0,R)}\int_{B_K} \underset{I_K\subset [0,R]}{{\rm max}}\, \underset{t\in I_K}{{\rm sup}} |e^{it\Delta}f(x)|^p \,dx
\]
For each $B_K\times I_K$, we fix a choice of $(k-1)$-subspaces $V_1,\cdots,V_A$ achieving the minimum in the definition of $\mu_{e^{it\Delta}f}(B_K\times I_K)$. On $B_K\times I_K$, we bound $|e^{it\Delta}f|^p$ by
\[
  K^{O(1)}\underset{\tau\notin V_a \text{for  all} \, a}{{\rm max}}|e^{it\Delta}f_\tau|^p
  + C \sum_{a=1}^A \left|\sum_{\tau\in V_a} e^{it\Delta}f_\tau\right|^p,
\]
so we can break $ \|e^{it\Delta}f\|_{L^p L^\infty (B_R^*)}^p$ into two parts correspondingly. The first part is
\begin{equation}\label{I}
K^{O(1)} \sum_{B_K\subset B(0,R)} \underset{I_K\subset [0,R]}{{\rm max}}\,\underset{\tau\notin V_a \text{for  all} \, a}{{\rm max}}\int_{B_K\times I_K}|e^{it\Delta}f_\tau|^p\,dxdt
\end{equation}
where we use the fact that each $|e^{it\Delta}f_\tau|$ is approximately constant on $B_K\times I_K$. Now by the choice of $V_1,\cdots,V_A$ for each $B_K\times I_K$ and Theorem \ref{klinfty}, the first part \eqref{I} is bounded by
\[
  \left[C(K,\epsilon)M^{-\epsilon^2}R^{\frac{2}{p}-\frac{5}{8}+\epsilon}\|f\|_2\right]^p.
\]
Next the second part is
\begin{equation}\label{II}
  C\sum_{a=1}^{A} \int_{B(0,R)} \underset{t\in [0,R]}{\rm sup} \left|\sum_{\tau\in V_a} e^{it\Delta}f_\tau\right|^p \,dx.
\end{equation}
Note that the balls $\tau$ are disjoint with radius $(KM)^{-1}$, and each $V_a$ is a $1$-dimensional subspace ($k=2$), so the number of $\tau\in V_a$ is $O(1)$. Hence we bound \eqref{II} by
\[
CA \sum_{\tau} \int_{B(0,R)} \underset{t\in [0,R]}{\rm sup} |e^{it\Delta}f_\tau|^p \,dx,
\]
and by the definition of $\beta$\,-- \eqref{beta}, this is further bounded by
\[
  \left[ CA^{\frac 1 p}K^{-\epsilon^2}C_\epsilon M^{-\epsilon^2}R^{\beta+\epsilon}\|f\|_2\right]^p.
\]
We can choose large constants $A=A(\epsilon)$ and $K=K(\epsilon)$ with the relation $A\ll K^{\epsilon^2}$. Then the second part is done by induction. This completes the proof of Theorem \ref{thm-3}, under the assumption that Theorem \ref{klinfty} holds.

\section{Properties of $BL^p_{k,A}L^q$} \label{property}
 \setcounter{equation}0

Now our goal is to prove Theorem \ref{klq}. First let us see some properties of $BL^p_{k,A}L^q$. Recall the setup in Section \ref{intro} and the definition of $\|e^{it\Delta}f\|_{BL_{k,A}^p L^q (U)}$ in \eqref{mu} and \eqref{q}, where $U\subset B^*_R$ is a subset.

\begin{lemma}
(a) Given subsets $U_1$ and $U_2$ in $B^*_R$, we have that
\[
  \|e^{it\Delta}f\|_{BL_{k,A}^p L^q (U_1 \cup U_2)}^p \leq \|e^{it\Delta}f\|_{BL_{k,A}^p L^q (U_1)}^p +\|e^{it\Delta}f\|_{BL_{k,A}^p L^q ( U_2)}^p.
\]

(b) Suppose that $A=A_1+A_2$, where $A,A_1, A_2$ are non-negative integers, then
\[
 \|e^{it\Delta}(f+g)\|_{BL_{k,A}^p L^q (U)}^p \lesssim_p \|e^{it\Delta}f\|_{BL_{k,A_1}^p L^q (U)}^p+\|e^{it\Delta}g\|_{BL_{k,A_2}^p L^q (U)}^p.
\]

(c) Suppose that $1\leq p < r$, then
\begin{equation} \label{holder}
\|e^{it\Delta}f\|_{BL_{k,A}^p L^q (B^*_R)} \lesssim_K R^{(2+\frac 1 q)(\frac 1 p-\frac 1 r)}\|e^{it\Delta}f\|_{BL_{k,A}^r L^q (B^*_R)}
\end{equation}
\end{lemma}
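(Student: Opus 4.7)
The plan is to handle the three parts sequentially; each reduces to elementary convexity inequalities applied at three levels: pointwise on a single box $B_K\times I_K$, in $l^q$ over the time intervals, and in the outer sum over spatial balls $B_K$. Throughout I abbreviate $c_U(B_K,I_K):=|U\cap(B_K\times I_K)|/|B_K\times I_K|$ and write $\mu=\mu_{e^{it\Delta}f}(B_K\times I_K)$ as in \eqref{mu}, denoting by $\mu^{(p)}$ and $\mu^{(r)}$ the versions where the integrand inside \eqref{mu} is $|e^{it\Delta}f_\tau|^p$ and $|e^{it\Delta}f_\tau|^r$ respectively.

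Part (a) follows from the pointwise bound $c_{U_1\cup U_2}\le c_{U_1}+c_{U_2}$ together with Minkowski in $l^q$ over $I_K$: for fixed $B_K$,
\[
\Bigl(\sum_{I_K}(c_{U_1\cup U_2}\mu)^q\Bigr)^{1/q}\le \Bigl(\sum_{I_K}(c_{U_1}\mu)^q\Bigr)^{1/q}+\Bigl(\sum_{I_K}(c_{U_2}\mu)^q\Bigr)^{1/q},
\]
and then summing in $B_K$. For part (b), since $(f+g)_\tau=f_\tau+g_\tau$, one has $|e^{it\Delta}(f+g)_\tau|^p\le 2^{p-1}(|e^{it\Delta}f_\tau|^p+|e^{it\Delta}g_\tau|^p)$. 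On each box, select subspaces $V_1,\dots,V_{A_1}$ achieving the minimum in $\mu_{e^{it\Delta}f}$ and $V'_1,\dots,V'_{A_2}$ achieving it in $\mu_{e^{it\Delta}g}$, and feed their union, a total of $A_1+A_2=A$ subspaces, into the min--max defining $\mu_{e^{it\Delta}(f+g)}$. Any $\tau$ avoiding all $A$ of them avoids both families individually, so $\mu_{e^{it\Delta}(f+g)}\le 2^{p-1}(\mu_{e^{it\Delta}f}+\mu_{e^{it\Delta}g})$ at each $B_K\times I_K$; Minkowski in $l^q$ and summation in $B_K$ finish the argument.

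For part (c), I would apply H\"older three times. First, on the box $B_K\times I_K$ of measure $\sim K^3$, H\"older with exponents $r/p$ and $r/(r-p)$ yields $\int|e^{it\Delta}f_\tau|^p\le K^{3(1-p/r)}(\int|e^{it\Delta}f_\tau|^r)^{p/r}$. Using the subspaces that minimize $\mu^{(r)}$ as admissible (though not necessarily optimal) candidates for $\mu^{(p)}$ gives $\mu^{(p)}\le K^{3(1-p/r)}(\mu^{(r)})^{p/r}$. Since $c_{B_R^*}\equiv 1$ on interior boxes, this feeds into
\[
\|e^{it\Delta}f\|_{BL_{k,A}^p L^q(B_R^*)}^p\le K^{3(1-p/r)}\sum_{B_K}\Bigl(\sum_{I_K}(\mu^{(r)})^{qp/r}\Bigr)^{1/q}.
\]
Second, H\"older on the $\sim R/K$ intervals $I_K$ extracts $(R/K)^{(1-p/r)/q}$ and converts the inner sum into $(\sum_{I_K}(\mu^{(r)})^q)^{p/(rq)}$. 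Third, H\"older on the $\sim (R/K)^2$ spatial balls $B_K$ produces an extra $(R/K)^{2(1-p/r)}$ and a factor of $\|e^{it\Delta}f\|_{BL_{k,A}^r L^q}^p$. Collecting, the total $R$-exponent is $(2+1/q)(1-p/r)$; after taking a $p$-th root this becomes $(2+1/q)(1/p-1/r)$, matching the claim, while the residual $K$-power is absorbed into the implicit constant.

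The only delicate point I anticipate is in (c): one must use the subspaces optimal for $\mu^{(r)}$ as admissible candidates for $\mu^{(p)}$ (not the other way around), and chain the two H\"older applications in the correct order so that both the $I_K$ count and the $B_K$ count contribute properly to the final $R^{(2+1/q)(1/p-1/r)}$ factor. Otherwise the lemma is routine bookkeeping with no conceptual obstacle, which is consistent with its role as a preparatory statement for the polynomial partitioning argument in later sections.
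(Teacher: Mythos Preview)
Your proof is correct and follows essentially the same route as the paper's: part (a) via $|U_1\cup U_2\cap\cdot|\le|U_1\cap\cdot|+|U_2\cap\cdot|$ plus the $l^q$ triangle inequality; part (b) via $|e^{it\Delta}(f+g)_\tau|^p\lesssim_p|e^{it\Delta}f_\tau|^p+|e^{it\Delta}g_\tau|^p$ and splitting the $A$ subspaces into $A_1+A_2$; part (c) via three nested H\"older applications on the box, the $I_K$-sum, and the $B_K$-sum. Your explicit remark that one should feed the $\mu^{(r)}$-optimal subspaces into $\mu^{(p)}$ (not the reverse) is a point the paper leaves implicit, but otherwise the arguments coincide.
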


\begin{proof}
Part (a) follows from the definition of $BL_{k,A}^p L^q$, the triangle inequality for $l^q$, and the simple fact that
\[
|(U_1\cup U_2)\cap(B_K\times I_K)| \leq |U_1\cap(B_K\times I_K)|+| U_2\cap(B_K\times I_K)| .
\]

Part (b) follows from the definition of $BL_{k,A}^p L^q$, the triangle inequality for $l^q$ and the following inequalities
\begin{align}
 &\underset{V_1,\cdots,V_A}{{\rm min}} \left(\underset{\tau \notin V_a ,1\leq a\leq A}{{\rm max}} \int_{B_K\times I_K} |e^{it\Delta}(f+g)_\tau|^p \right) \notag\\
 \lesssim_p & \underset{V_1,\cdots,V_A}{{\rm min}} \left(\underset{\tau \notin V_a , 1\leq a\leq A}{{\rm max}} \int_{B_K\times I_K} |e^{it\Delta}f_\tau|^p  +  |e^{it\Delta}g_\tau|^p \right) \notag\\
 \leq & \underset{V_1,\cdots,V_{A_1}}{{\rm min}}
 \left(\underset{\tau \notin V_a , 1\leq a\leq A_1}{{\rm max}}
 \int_{B_K\times I_K} |e^{it\Delta}f_\tau|^p \right) \notag \\
 +&\underset{V_{A_1+1},\cdots,V_A}{{\rm min}} \left(\underset{\tau \notin V_a , A_{1}+1\leq a\leq A}{{\rm max}} \int_{B_K\times I_K} |e^{it\Delta}g_\tau|^p \right). \notag
\end{align}

For part (c), the left-hand side is
\[
  \left\{ \sum_{B_K\subset B_R} \left[\sum_{I_K\subset [0,R]}\left(\underset{V_1,\cdots,V_A}{{\rm min}} \underset{\tau \notin V_a }{{\rm max}} \int_{B_K\times I_K} |e^{it\Delta}f_\tau|^p\right)^q\right]^{\frac 1 q}\right\}^{\frac 1 p}.
\]
First, apply the H\"older's inequality to the inner integral
\[
   \int_{B_K\times I_K} |e^{it\Delta}f_\tau|^p \leq  \left(\int_{B_K\times I_K} |e^{it\Delta}f_\tau|^r \right)^{\frac p r} K^{3(1-\frac p r)}.
\]
Next by applying H\"older's inequality to the sum over $I_K\subset [0,R]$ and then to the sum over $B_K \subset B_R$, we bound $\|e^{it\Delta}f\|_{BL_{k,A}^p L^q (B^*_R)}$ by
\[
 \left[\sum_{B_K\subset B_R}\left(\sum_{I_K\subset [0,R]} K^{3q}\right)^{\frac 1 q}\right]^{\frac 1 p - \frac  1 r} \cdot  \|e^{it\Delta}f\|_{BL_{k,A}^r L^q (B^*_R)},
\]
which is $C(K)R^{(2+\frac 1 q)(\frac 1 p -\frac 1 r)} \|e^{it\Delta}f\|_{BL_{k,A}^r L^q (B^*_R)}.$
\end{proof}

\section{Polynomial Partitioning}
\setcounter{equation}0

In this section, we work in general dimension--$\ZR^n\times \ZR$. We aim to introduce a polynomial $P$ in the polynomial ring $\mathbb {R}[x,t]$ such  that
the variety $Z(P)=\{(x,t)\in \mathbb{R}^n \times \mathbb{R}: P(x,t)=0\}$
 bisects  every member in a collection of some quantities. It relies on the famous Borsuk-Ulam Theorem,  asserting that
 {\it if $F:\mathbb{S}^N \xrightarrow{} \mathbb{R}^N$ is a continuous function,  where $\mathbb{S}^N$ is the $N$-sphere, then there exists a point $v\in \mathbb{S}^N$ with $F(v)=F(-v)$.}

First we state a sandwich theorem, which is a consequence of Borsuk-Ulam Theorem.

\begin{lemma} \label{Thm:Sandwich}
Suppose that $f_1,f_2, \cdots, f_N$ are functions with ${\rm supp}\widehat{f_j}\subset B(0,1)\subset \ZR^n$, $U_1,U_2,\cdots,U_N$ are subsets of $B^n(0,R)\times [0,R]$, and $1\leq p, r<\infty$, then there exists a non-zero polynomial $P$ on $\mathbb{R}^n \times \mathbb{R}$ of degree $\leq c_nN^{1/(n+1)}$ such that for each $1\leq j\leq N$ we have
  \[
    \|e^{it\Delta}f_j\|_{BL_{k,A}^p L^r (U_j\cap \{P>0\})}^p=\|e^{it\Delta}f_j\|_{BL_{k,A}^p L^r (U_j\cap \{P<0\})}^p.
  \]
\end{lemma}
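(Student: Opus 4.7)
The plan is a standard Borsuk--Ulam argument applied in the space of polynomials, in the spirit of Guth's polynomial partitioning. The key observation to set things up is that once the functions $f_j$ and the sets $U_j$ are fixed, the quantity $\|e^{it\Delta}f_j\|^p_{BL^p_{k,A}L^r(U_j\cap\Omega)}$ depends on the region $\Omega$ \emph{only} through the Lebesgue measures $|U_j\cap\Omega\cap(B_K\times I_K)|$ appearing in \eqref{q}, because the inner quantity $\mu_{e^{it\Delta}f_j}(B_K\times I_K)$ does not see $\Omega$ at all.

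First I would count dimensions. Real polynomials on $\ZR^n\times\ZR$ of degree $\leq D$ form a real vector space of dimension $\binom{n+1+D}{n+1}\sim D^{n+1}$. Pick $D=\lceil c_n N^{1/(n+1)}\rceil$ with $c_n$ large enough that this dimension exceeds $N+1$, fix an $(N+1)$-dimensional linear subspace $\mathcal V$ of that polynomial space, and parametrize $S^N$ as the unit sphere of $\mathcal V$ in some fixed Euclidean norm. Every point of $S^N$ is then a non-zero polynomial on $\ZR^n\times\ZR$ of degree at most $D$.

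Next, for each $j=1,\dots,N$ define
\[
F_j(P):=\|e^{it\Delta}f_j\|^p_{BL^p_{k,A}L^r(U_j\cap\{P>0\})}-\|e^{it\Delta}f_j\|^p_{BL^p_{k,A}L^r(U_j\cap\{P<0\})},
\]
and form $F=(F_1,\dots,F_N)\colon S^N\to\ZR^N$. Oddness $F(-P)=-F(P)$ is immediate, since $\{-P>0\}=\{P<0\}$ and vice versa, so each summand in the definition of $F_j$ swaps sign. For continuity, note that any non-zero polynomial has vanishing locus of Lebesgue measure zero. Hence for a sequence $P_m\to P$ in $S^N$ (necessarily uniform convergence on compacta since $\mathcal V$ is finite-dimensional), dominated convergence gives $|U_j\cap\{P_m>0\}\cap(B_K\times I_K)|\to|U_j\cap\{P>0\}\cap(B_K\times I_K)|$, and likewise for $\{P<0\}$. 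Since $\|\cdot\|^p_{BL^p_{k,A}L^r(\cdot)}$ is assembled from finite sums, an $\ell^q$-type norm over the $I_K$'s, and continuous algebraic operations on those measures (with the $P$-independent factor $\mu_{e^{it\Delta}f_j}(B_K\times I_K)$ just a constant multiplier), the map $F$ is continuous on $S^N$.

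Finally, Borsuk--Ulam supplies a point $v\in S^N$ with $F(v)=F(-v)=-F(v)$, and hence $F(v)=0$. The polynomial $P:=v$ is non-zero (being on the unit sphere), has degree at most $D\lesssim N^{1/(n+1)}$, and satisfies $F_j(P)=0$ for every $j$, which is exactly the required equality of $BL$-norms. The only non-algebraic step in the argument is verifying continuity of $F$, and that reduces entirely to the measure-zero property of the vanishing set of a non-zero polynomial; this is the main point that needs checking, and it is routine.
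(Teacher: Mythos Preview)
Your proof is correct and follows essentially the same Borsuk--Ulam argument as the paper: choose a subspace of polynomials of dimension $N+1$, define the odd map $F$ on its unit sphere, verify continuity, and apply Borsuk--Ulam. The only presentational difference is in the continuity step: you reduce to convergence of the measures $|U_j\cap\{P_m>0\}\cap(B_K\times I_K)|$ via pointwise convergence of indicators and dominated convergence, whereas the paper bounds $|F_j(P_l)-F_j(P)|$ by twice the $BL$-norm over the symmetric-difference region $\{P_lP\le 0\}$ and observes that $\bigcap_{l_0}\bigcup_{l\ge l_0}\{P_lP\le 0\}\subset P^{-1}(0)$; both arguments ultimately rest on $|P^{-1}(0)|=0$ and are equivalent.
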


\begin{proof}
  Let $V$ be the vector space of polynomials on $\mathbb{R}^n \times \mathbb{R}$ of degree at most $D$, then
\[
  \text{Dim} V = \binom{D+n+1}{n+1} \sim_n D^{n+1}.
\]
So we can choose $D \sim N^{1/(n+1)}$ such that $\text{Dim} V\geq N+1$, and without loss of generality we can assume $\text{Dim} V = N+1$ and identify $V$ with $\mathbb{R}^{N+1}$. We define a function $G$ as follows:
\begin{align}
  \mathbb{S}^N \subseteq V \backslash \{0\} & \xlongrightarrow{G} \mathbb{R}^N \notag\\
  P   & \mapsto \{G_j(P)\}_{j=1}^{N} \notag
\end{align}
where
\[
  G_j(P):= \|e^{it\Delta}f_j\|_{BL_{k,A}^p L^r (U_j\cap \{P>0\})}^p-\|e^{it\Delta}f_j\|_{BL_{k,A}^p L^r (U_j\cap \{P<0\})}^p ,
\]
it is obvious that $G(-P)=-G(P).$
Assume that the function $G$ is continuous, then Borsuk-Ulam Theorem implies that there exists $P\in \mathbb{S}^N \subseteq V
\backslash \{0\}$ with $G(P)=G(-P)$, hence $G(P)=0$, and $P$ obeys the conclusion of Lemma \ref{Thm:Sandwich}.
It remains to check the continuity of the functions $G_j$ on $V \backslash \{0\}$.

 Suppose that $P_l \to P$ in $V \backslash \{0\}$. Note that
\[
  |G_j(P_l)-G_j(P)| \leq 2 \|e^{it\Delta}f_j\|_{BL_{k,A}^p L^r (U_j\cap \{P_lP\leq 0\})}^p
\]
while $P_l \to P$ implies that
\[
  \bigcap_{l_0}\bigcup_{l\geq l_0}\{(x,t): P_l(x,t)\cdot P(x,t)\leq0\} \subseteq P^{-1}(0),
\]
so we have that
\[
  \lim_{l_0\to\infty} \|e^{it\Delta}f_j\|_{BL_{k,A}^p L^r (U_j\cap (\cup_{l\geq l_0}\{P_lP\leq 0\}))}^p
  = \|e^{it\Delta}f_j\|_{BL_{k,A}^p L^r (U_j\cap P^{-1}(0))}^p=0,
\]
the last equality follows from $|P^{-1}(0)|=0$. This proves that $\lim_{l\to\infty}|G_j(P_l)-G_j(P)| =0$, showing that $G_j$ is continuous on $V\backslash\{0\}.$
\end{proof}

Next we use Lemma \ref{Thm:Sandwich} repeatedly to prove the following partitioning result.
\begin{theorem} \label{Thm:PolynPart}
Suppose that $f$ is a function with ${\rm supp}\widehat{f}\subset B(0,1)\subset \ZR^n$, $U$ is a subset of $B^n(0,R)\times [0,R]$, and $1\leq p, r<\infty$, then for each $D$ there exists a non-zero polynomial $P$ of degree at most D such that $(\mathbb{R}^n\times\mathbb{R})\backslash Z(P)$ is a union of $\sim_n D^{n+1}$ disjoint open sets $O_i$ and for each $i$ we have
  \[
  \|e^{it\Delta}f\|_{BL_{k,A}^p L^r (U)}^p \leq c_n D^{n+1}\|e^{it\Delta}f\|_{BL_{k,A}^p L^r (U\cap O_i)}^p.
  \]
\end{theorem}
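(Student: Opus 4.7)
The plan is to iterate Lemma \ref{Thm:Sandwich} in the spirit of the standard polynomial partitioning construction (cf. Guth \cite{lG}, \cite{lG16}). I would build the polynomial $P$ as a product $P = P_1 P_2 \cdots P_J$, where each $P_s$ is produced by one application of the sandwich lemma and simultaneously bisects the $BL^p_{k,A}L^r$-norm on every cell that exists at stage $s-1$.

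More precisely, I would proceed inductively. Set stage $0$ to consist of the single cell $U$. At stage $s$, suppose we have $N_s = 2^s$ cells $\{O^{(s)}_j\}_{j=1}^{N_s}$ (up to a measure-zero piece coming from the previous zero sets). Apply Lemma \ref{Thm:Sandwich} with the choices $f_j = f$ and $U_j = O^{(s)}_j$ to obtain a non-zero polynomial $P_{s+1}$ of degree at most $c_n N_s^{1/(n+1)} = c_n 2^{s/(n+1)}$ such that, for every $j$,
\[
\|e^{it\Delta}f\|_{BL_{k,A}^p L^r (O^{(s)}_j \cap \{P_{s+1}>0\})}^p = \|e^{it\Delta}f\|_{BL_{k,A}^p L^r (O^{(s)}_j \cap \{P_{s+1}<0\})}^p .
\]
Define the stage-$(s+1)$ cells to be the sets $O^{(s)}_j \cap \{P_{s+1}>0\}$ and $O^{(s)}_j \cap \{P_{s+1}<0\}$. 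By part (a) of the lemma in Section \ref{property}, the bisection yields
\[
\|e^{it\Delta}f\|_{BL_{k,A}^p L^r (O^{(s)}_j)}^p \le 2\,\|e^{it\Delta}f\|_{BL_{k,A}^p L^r (O')}^p
\]
for either child cell $O'$. Stop when $2^J \sim D^{n+1}$, i.e.\ $J \sim (n+1)\log_2 D$, and set $P = P_1 P_2 \cdots P_J$. The open sets $O_i$ are the connected components of $(\ZR^n \times \ZR) \setminus Z(P)$ (which refine, and equal up to the measure-zero zero set, the final stage cells).

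Two points still need to be verified. First, the degree bound:
\[
\deg P = \sum_{s=1}^{J} \deg P_s \le c_n \sum_{s=0}^{J-1} 2^{s/(n+1)} \lesssim_n 2^{J/(n+1)} \sim D,
\]
since the geometric series is dominated by its last term. So $\deg P \le D$ after adjusting the constant (or the stopping criterion) suitably. Second, iterating the bisection $J$ times yields
\[
\|e^{it\Delta}f\|_{BL_{k,A}^p L^r (U)}^p \le 2^J \,\|e^{it\Delta}f\|_{BL_{k,A}^p L^r (U \cap O_i)}^p \lesssim_n D^{n+1}\,\|e^{it\Delta}f\|_{BL_{k,A}^p L^r (U \cap O_i)}^p,
\]
for every final cell $O_i$, which is the conclusion of the theorem.

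The main subtle point is not the iteration itself but the book-keeping across stages: the sandwich lemma delivers a single polynomial that bisects the norms on all current cells at once, so the number of cells exactly doubles at every step while the degree of the new polynomial grows only like $2^{s/(n+1)}$. This is what makes the geometric series telescope and keeps the total degree at $O(D)$ while producing $\sim D^{n+1}$ cells. A minor technicality is that the $\{P_s = 0\}$ sets have measure zero and therefore contribute nothing to the $BL^p_{k,A}L^r$-norms (by the continuity argument already used at the end of the proof of Lemma \ref{Thm:Sandwich}), so removing them to pass from cells to connected components of $(\ZR^n \times \ZR) \setminus Z(P)$ is harmless.
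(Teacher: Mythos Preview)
Your proposal is correct and follows essentially the same approach as the paper's proof: iterated application of the sandwich lemma (Lemma~\ref{Thm:Sandwich}), doubling the number of cells at each step while summing a geometric series for the degrees. One small caveat: the $O_i$ should be the \emph{sign cells} (your ``final stage cells''), not the connected components of $(\ZR^n\times\ZR)\setminus Z(P)$ --- a sign cell may well be disconnected, and the inequality need not persist on each individual component, so your parenthetical ``equal up to measure zero'' is not accurate and you should simply take the $O_i$ to be the sign cells, as the paper does.
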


\begin{proof}
  By Lemma \ref{Thm:Sandwich}, we obtain a polynomial $P_1$ of degree $\lesssim 1$ such that
  \[
    \|e^{it\Delta}f\|_{BL_{k,A}^p L^r (U\cap \{P_1>0\})}^p=\|e^{it\Delta}f\|_{BL_{k,A}^p L^r (U\cap \{P_1<0\})}^p.
  \]
Next by Lemma \ref{Thm:Sandwich} again we obtain a polynomial $P_2$ of degree $\lesssim 2^{1/(n+1)}$ such that
\begin{align}
  \|e^{it\Delta}f\|_{BL_{k,A}^p L^r (U\cap \{P_1>0\}\cap \{P_2>0\})}^p &=\|e^{it\Delta}f\|_{BL_{k,A}^p L^r (U\cap \{P_1>0\}\cap \{P_2<0\})}^p\,, \notag\\
  \|e^{it\Delta}f\|_{BL_{k,A}^p L^r (U\cap \{P_1<0\}\cap \{P_2>0\})}^p &=\|e^{it\Delta}f\|_{BL_{k,A}^p L^r (U\cap \{P_1<0\}\cap \{P_2<0\})}^p\,. \notag
\end{align}
Continuing inductively, we construct polynomials $P_1, P_2, \cdots, P_s$. Let $P:=\prod_{l=1}^s P_l$. The sign conditions of the polynomials cut $(\mathbb{R}^n\times\mathbb{R})\backslash Z(P)$ into $2^s$ cells $O_i$, and by construction and triangle inequality we have that for each $i$
\[
  \|e^{it\Delta}f\|_{BL_{k,A}^p L^r (U)}^p \leq 2^s \|e^{it\Delta}f\|_{BL_{k,A}^p L^r (U\cap O_i)}^p.
\]
 By construction, $\text{deg}\, P_l \lesssim 2^{(l-1)/(n+1)}$, therefore $\text{deg}\, P \leq c_n2^{s/(n+1)}$. We can choose $s$ such that $c_n2^{s/(n+1)} \in [D/2,D]$, then we have $\text{deg} \,P \leq D$ and the number of cells $2^s \sim_{n} D^{n+1}$.
\end{proof}

We write $Z(P_1,\cdots,P_{n+1-m})$ for the set of common zeros of the polynomials $P_1,\cdots,P_{n+1-m}$.
Throughout the paper, we will work with a nice class of varieties called transverse complete intersections. Here is the definition.
\begin{definition}
  We say that a variety $Z(P_1,\cdots, P_{n+1-m})$ is a transverse complete intersection if $\nabla P_1(z)\wedge\cdots \nabla P_{n+1-m}(z) \neq 0$ for each point $z$ in $Z(P_1,\cdots,P_{n+1-m})$.
\end{definition}
Guth (see Lemma 5.1. in \cite{lG16}) proved the following result using Sard's theorem, which guarantees that there are lots of transverse complete intersections.
\begin{lemma} \label{tci} \emph{[Guth]}\,
  If $P$ is a polynomial on $\ZR^n\times \ZR$, then for almost every $c_0\in \ZR$, $Z(P+c_0)$ is a transverse complete intersection.

  More generally, suppose that $Z(P_1,\cdots,P_{n+1-m})$ is a transverse complete intersection and that $P$ is another polynomial. Then for almost every $c_0\in \ZR$, $Z(P_1,\cdots,P_{n+1-m},P+c_0)$ is a transverse complete intersection.
\end{lemma}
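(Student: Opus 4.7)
The plan is to deduce both assertions from Sard's theorem applied to an appropriate restricted map. I will argue the general statement directly; the first assertion falls out as the special case where no $P_i$'s are present (so $Z=\ZR^n\times\ZR$ and the argument collapses to Sard's theorem for the single map $P\colon\ZR^n\times\ZR\to\ZR$).

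Set $Z:=Z(P_1,\ldots,P_{n+1-m})$. The hypothesis that $\nabla P_1(z),\ldots,\nabla P_{n+1-m}(z)$ are linearly independent at every $z\in Z$ is precisely the transversality needed to invoke the implicit function theorem, making $Z$ a smooth $m$-dimensional submanifold of $\ZR^n\times\ZR$; at each $z\in Z$ the tangent space is the orthogonal complement of $\mathrm{span}(\nabla P_1(z),\ldots,\nabla P_{n+1-m}(z))$. The crucial observation is that, for the smooth (in fact polynomial) restriction $P|_Z\colon Z\to\ZR$, a point $z\in Z$ is critical precisely when $d(P|_Z)_z=0$, equivalently when $\nabla P(z)\perp T_zZ$, equivalently when $\nabla P(z)\in\mathrm{span}(\nabla P_1(z),\ldots,\nabla P_{n+1-m}(z))$.

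With this dictionary in hand, apply Sard's theorem to $P|_Z$: its critical values form a set of Lebesgue measure zero in $\ZR$, so for almost every $c_0\in\ZR$ the value $-c_0$ is a regular value. For any such $c_0$, at every $z\in Z\cap P^{-1}(-c_0)=Z(P_1,\ldots,P_{n+1-m},P+c_0)$ the gradient $\nabla(P+c_0)(z)=\nabla P(z)$ therefore lies \emph{outside} $\mathrm{span}(\nabla P_1(z),\ldots,\nabla P_{n+1-m}(z))$, and combined with the original hypothesis the full list $\nabla P_1(z),\ldots,\nabla P_{n+1-m}(z),\nabla(P+c_0)(z)$ is linearly independent; their wedge product is nonzero, which is exactly the transverse complete intersection condition.

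The only real conceptual step is the critical-point dictionary above; once that is in place, Sard's theorem does the rest automatically, since polynomials are $C^\infty$ and so no regularity hypothesis fails. The boundary case $m=0$ is trivial: $Z$ is then a finite set, and $Z(P_1,\ldots,P_{n+1-m},P+c_0)$ is empty for all $c_0$ outside the finite set $\{-P(z):z\in Z\}$, so the transverse complete intersection condition holds vacuously for almost every $c_0$. No step is expected to present a genuine obstacle.
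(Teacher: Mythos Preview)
Your argument is correct and is precisely the Sard's theorem approach the paper invokes (it does not spell out a proof but simply attributes the lemma to Guth and remarks that it follows from Sard's theorem). Your identification of the critical points of $P|_Z$ with the points where $\nabla P$ falls into the span of $\nabla P_1,\ldots,\nabla P_{n+1-m}$ is exactly the standard reduction, and since $Z$ is a smooth submanifold of Euclidean space (hence second countable) and $P$ is $C^\infty$, Sard applies without any caveats.
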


The following partitioning theorem follows from the proof of Theorem \ref{Thm:PolynPart}. It is designed to allow small perturbations. In combination with Lemma \ref{tci}, it allows us to arrange that all the varieties that appear in our argument are transverse complete intersections.

\begin{theorem} \label{Thm:PolynPart2}
  Suppose that $f$ is a function with ${\rm supp}\widehat{f}\subset B(0,1)\subset \ZR^n$, $U$ is a subset of $B^n(0,R)\times [0,R]$, and $1\leq p, r<\infty$, then for each $D$ there exists a sequence of polynomials $Q_1,\cdots, Q_s$ on $\ZR^n\times \ZR$ with the following properties. We have $\sum_{l=1}^{s} {\rm Deg}\, Q_l\leq D$ and $2^s \sim D^{n+1}$. Let $P=\prod_{l=1}^s \tilde{Q}_l=\prod_{l=1}^s (Q_l+c_l)$ where $c_l \in \ZR$. Let $O_i$ be the open sets given by the sign conditions of $\tilde{Q}_l$. There are $2^s \sim D^{n+1}$ cells $O_i$ and $(\ZR^{n}\times \ZR )\backslash Z(P)=\cup O_i$.

  If the constants $c_l$ are sufficiently small, then for each $O_i$,
  \[
  \|e^{it\Delta}f\|_{BL_{k,A}^p L^r (U)}^p \leq c_n D^{n+1}\|e^{it\Delta}f\|_{BL_{k,A}^p L^r (U\cap O_i)}^p.
  \]
\end{theorem}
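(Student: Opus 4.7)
The plan is to recycle the inductive bisection construction from the proof of Theorem \ref{Thm:PolynPart}, replacing the bisecting polynomials $P_l$ there by polynomials $Q_l$ produced in exactly the same way, and then to argue that the resulting cell estimate is stable under replacing each $Q_l$ by a small additive perturbation $\tilde Q_l = Q_l + c_l$. The stability rests on the same continuity observation used inside the proof of Lemma \ref{Thm:Sandwich}: the zero set of a nonzero polynomial has Lebesgue measure zero, so the functional $U \mapsto \|e^{it\Delta}f\|_{BL^p_{k,A}L^r(U)}^p$ depends continuously on $U$ under symmetric-difference-in-measure convergence.

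For the construction step, I apply Lemma \ref{Thm:Sandwich} iteratively. At step $l$, the earlier polynomials $Q_1,\ldots,Q_{l-1}$ cut $U$ into $2^{l-1}$ open sign cells $O^{(l-1)}_j$, and Lemma \ref{Thm:Sandwich} applied to the $N = 2^{l-1}$ pairs $(f, U\cap O^{(l-1)}_j)$ produces $Q_l$ of degree $\lesssim_n 2^{(l-1)/(n+1)}$ that simultaneously bisects the $p$-th power of the $BL^p_{k,A}L^r$ norm on each $O^{(l-1)}_j$. After $s$ steps one has $\sum_{l=1}^s \deg Q_l \lesssim_n 2^{s/(n+1)}$, so picking $s$ with $c_n 2^{s/(n+1)}\in [D/2, D]$ gives $2^s \sim_n D^{n+1}$ cells, each carrying exactly $2^{-s}$ of the total norm.

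For the perturbation, fix any sign pattern $\vec\epsilon\in\{+,-\}^s$. The perturbed cell $\bigcap_l \{\mathrm{sgn}(Q_l+c_l)=\epsilon_l\}$ differs from the unperturbed one by a set contained in $\bigcup_l\{|Q_l|\leq |c_l|\}$, whose measure tends to zero as $\vec c \to 0$ since each $Z(Q_l)$ has measure zero. Because $\|e^{it\Delta}f\|_{BL^p_{k,A}L^r(\cdot)}^p$ depends continuously on the underlying set through the factors $|U\cap (B_K\times I_K)|$ appearing in its definition, I can find $\delta_{\vec\epsilon} > 0$ such that $|c_l| < \delta_{\vec\epsilon}$ forces the perturbed-cell norm to lie within a factor of $2$ of the unperturbed value. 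Taking the minimum of $\delta_{\vec\epsilon}$ over the $2^s$ sign patterns yields a single threshold under which every perturbed cell still carries at least $2^{-(s+1)}$ of the total norm. Since $2^{s+1}\leq c_n D^{n+1}$ (after enlarging $c_n$ by a factor of two), the claimed cell inequality follows.

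The only real obstacle is the bookkeeping of this final finite compactness step: each $Q_l$ feeds into every later sign cell, so one has to select the $c_l$ simultaneously rather than one at a time, and one must verify that a single $\delta$ can be extracted uniformly over all $2^s$ cells. Since $s$ is finite, this reduces to a minimum over finitely many thresholds; no new analytic input beyond what already powered Lemma \ref{Thm:Sandwich} is needed.
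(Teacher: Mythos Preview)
Your proposal is correct and matches the paper's approach. The paper does not give a separate proof of Theorem~\ref{Thm:PolynPart2}; it simply states that it ``follows from the proof of Theorem~\ref{Thm:PolynPart}'' and is ``designed to allow small perturbations,'' which is exactly the two-step argument you have written out (recycle the iterated bisection to produce the $Q_l$, then use the measure-continuity of $\|e^{it\Delta}f\|_{BL^p_{k,A}L^r(\cdot)}^p$---the same mechanism as in the continuity step of Lemma~\ref{Thm:Sandwich}---to absorb sufficiently small additive perturbations $c_l$).
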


\section{Wave Packets Decomposition}\label{WPD}
\setcounter{equation}0

We focus on the dimension $n=2$ in the rest of the paper.

A (dyadic) rectangle in $\mathbb R^{2}$ is a product of (dyadic) intervals with respect to given coordinate axes of
$\mathbb R^{2}$. Two (dyadic) rectangles $\theta=\prod_{j=1}^{2}{\theta_j}$ and $\nu=\prod_{j=1}^{2}\nu_j$ are said to
be dual if $|\theta_j||\nu_j|=1$ for $j=1, 2$ and they share the same coordinate axes. We say that $(\theta ,\nu)$ is a tile if it is a pair of two  ${dual}$
(dyadic) rectangles.  The dyadic condition is not essential in our decomposition.

Let $\varphi$ be a Schwartz function from $\mathbb R $ to $\mathbb R$ for which $\widehat{\varphi}\geq 0$ is supported
in a small interval, of radius $\kappa$ ($\kappa$ is a fixed small constant), about the origin in $\mathbb R$, and it is identically
$1$ on another smaller interval around the origin. For a (dyadic) rectangular box $\theta=\prod_{j=1}^2 \theta_j$,
set
\begin{equation}
 \varphi_\theta(x_1, x_2) = \prod_{j=1}^{2} \frac{1}{|\theta_j|^{1/2}}\varphi\left( \frac{x_j-c(\theta_j)}{|\theta_j|}\right)\,.
\end{equation}
Here $c(\theta_j)$ is the center of the interval $\theta_j$ and hence $c(\theta)=\left(c(\theta_1),  c(\theta_{2})\right)$
 is the center of the rectangle $\theta$.
For a tile $(\theta, \nu)$ and $x\in \mathbb R^{2}$, we define
\begin{equation}
\varphi_{\theta, \nu}(x)= e^{2\pi ic(\nu)\cdot x}\varphi_\theta(x).
\end{equation}
We say that the dimensions of a tile $(\theta, \nu)$ are $\prod_{j=1}^{2} |\theta_j|$ for
$\theta= \prod_{j=1}^{2} \theta_j$.
Let $\bT$ be a collection of all tiles with fixed dimensions and coordinated axes.
Then for any Schwartz function $f$ from $\mathbb R^{2}$ to $ \mathbb R$, we have the following
representation
\begin{equation}\label{rep0}
 f(x) = c_\kappa \sum_{(\theta, \nu)\in \bT} f_{\theta,\nu}
 := c_\kappa \sum_{(\theta, \nu)\in \bT}\langle f, \varphi_{\theta,\nu}\rangle \varphi_{\theta,\nu}(x)\,,
\end{equation}
where $c_\kappa$ is an absolute constant.
This representation can be proved directly (see \cite{LL}) or
by employing inductively the one-dimensional result in \cite{LT1}. \\

We only need to focus on those tiles that can be written as a pair of
$R^{-\frac12}$-cube and $R^{\frac12}$-cube in $\mathbb R^2$,
when we apply the wave packets decomposition for a function with Fourier support in
a given $R^{-\frac12}$-cube.  Indeed, let $\theta$ be a $R^{-\frac12}$-cube (or ball)
in $B(0,1)\subset \ZR^2$. Let $\bT_\theta$ be a collection of all tiles $(\theta',\nu)$
such that $\nu$'s are $R^{\frac12}$-cubes and $\theta'=\theta$. Then for
Schwartz function $f$ with ${\rm supp}\wh{f}\subset B(0,1)$, we have
\begin{equation}\label{rep1}
 f(x) = c_\kappa\sum_{\theta}\sum_{(\theta',\nu)\in\bT_\theta}\langle f, \varphi_{\theta',\nu}\rangle \varphi_{\theta',\nu}(x)\,.
\end{equation}
Here $\theta$'s range over all possible cubes in ${\rm supp}\widehat{f}$. We use
$\bT$ to denote $\cup_\theta \bT_\theta$. It is clear that
\begin{equation}
 \sum_{(\theta,\nu)\in \bT}\big|\langle f,\varphi_{\theta,\nu}\rangle\big|^2\sim \|f\|_2^2\,.
\end{equation}
We set
\begin{equation}
 \psi_{\theta,\nu}(x, t) = e^{it\Delta}\varphi_{\theta,\nu}(x)\,.
\end{equation}
From (\ref{rep1}), we end up with the following representation for $e^{it\Delta}f$ :
\begin{equation}\label{rep2}
 e^{it\Delta}f(x) =c_\kappa \sum_{(\theta,\nu)\in \bT} e^{it\Delta}f_{\theta,\nu}(x)= c_\kappa \sum_{(\theta,\nu)\in \bT} \langle f, \varphi_{\theta,\nu}\rangle\psi_{\theta,\nu}(x, t)\,.
\end{equation}
We shall analyze the localization of $\psi_{\theta,\nu}$ in the time-frequence space.
Notice that $t$ is restricted to $[0, R]$. Let $\psi$ be a Schwartz function with
$\wh{\psi}$ supported in $[-1, 1]$ and $2\psi(t)\geq \chi_{[0,1]}(t)$.
Here $\chi_{[0,1]}$ is the characteristic function on $[0,1]$.
We can replace $\psi_{\theta,\nu}$ by
$\psi_{\theta,\nu}^*$ in (\ref{rep2}), where
\begin{equation}
 \psi_{\theta,\nu}^*(x,t) =\psi_{\theta,\nu}(x, t) \psi\big(\frac{t}{R}\big)\,.
\end{equation}
Let
\begin{equation}\label{defofTs}
  T_{\theta,\nu} := \{(x,t)\in\mathbb{R}^2\times\mathbb{R}\,:\,0\leq t\leq R, |x-c(\nu)+2tc(\theta)|\leq R^{1/2+\delta}\}\, ,
\end{equation}
where $\delta$ is a small positive parameter. $T_{\theta,\nu}$ is a tube of length $R$, radius $R^{1/2+\delta}$, with the
direction $G_0(\theta)=(-2c(\theta),1)$, and intersecting $\{t=0\}$ at a $R^{1/2+\delta}$-ball centered at $c(\nu)$.
From the definitions of $e^{it\Delta}$ and $\psi_{\theta,\nu}$, it is easy to see that, by integration by parts,
$\psi^*_{\theta,\nu}$ is almost supported in the tube $T_{\theta,\nu}$.
More precisely, we have
\begin{equation}\label{eq:psis}
 |\psi^*_{\theta,\nu}(x,t)|\leq \frac{1}{\sqrt{R}} \chi^*_{T_{\theta,\nu}}(x,t) \, ,
\end{equation}
where
$\chi^*_{T_{\theta,\nu}}$ denotes a bump function satisfying that
$\chi^*_{T_{\theta,\nu}}=1$ on $\{(x,t)\in\mathbb{R}^2\times\mathbb{R}\,:\,0\leq t\leq R, |x-c(\nu)+2tc(\theta)|\leq \sqrt{R}\}$,
and $\chi^*_{T_{\theta,\nu}}=O(R^{-1000})$ outside $T_{\theta,\nu}$.  $\chi^*_{T_{\theta,\nu}}$ essentially can be viewed as
$\chi_{T_{\theta,\nu}}$, the indicator function on the tube $T_{\theta,\nu}$.

On the other hand, the Fourier transform of $\psi_{\theta,\nu}^*$ enjoys
\begin{equation}
  \widehat{\psi^*_{\theta,\nu}}(\xi_1,\xi_2,\xi_3)
 =R\widehat{\varphi_{\theta,\nu}}(\xi_1,\xi_2)\widehat{\psi}\left(\frac{\xi_3-(\xi_1^2+\xi_2^2)}{1/R}\right)\,.
\end{equation}
Hence $\widehat{\psi^*_{\theta,\nu}}$ is supported in a $\frac{1}{R}$-neighborhood of parabolic cap over $\theta$, i.e.,
\begin{equation} \label{eq:psishat}
  \text{supp}\,\widehat{\psi^*_{\theta,\nu}} \subseteq \big\{(\xi_1,\xi_2,\xi_3)\,:\,
(\xi_1,\xi_2)\in \theta, |\xi_3-(\xi_1^2+\xi_2^2)|\leq \frac{1}{R} \big\}.
\end{equation}
We denote this $\frac{1}{R}$-neighborhood of parabolic cap over $\theta$ by $\theta^*$.
In the rest of the paper, we can assume that the function $\psi_{\theta,\nu}$ is essentially localized in
$T_{\theta,\nu}\times \theta^*$ in the time-frequence space.

\section{Main Proposition}
\label{MP}
\setcounter{equation}0
Now we set up the inductive argument to prove Theorem \ref{klq}. Let $m=1$ or $2$ denote the dimension of a variety. We choose small parameters $0<\delta \ll \delta_2 \ll \delta_1 \ll \delta_0 \ll \epsilon.$ We write ${\rm RapDec}(R)$ for terms rapidly decaying in $R$ which are negligible in our estimates. For a variety $Z$ and a point $z\in Z$, let $T_zZ$ denote the tangent space to Z at the point z.
We use the following definition from \cite{lG16} of a wave packet being tangent to a transverse complete intersection $Z$.
\begin{definition}
  Suppose that $Z=Z(P_1,\cdots,P_{3-m})$ is a transverse complete intersection in $\ZR^2\times \ZR$. We say that $T_{\theta, \nu}$ is $R^{-\frac 1 2 +\delta_m}$-tangent to Z in $B^*_R$ if the following two conditions hold:

\begin{itemize}
\item Distance condition:
\[
  T_{\theta,\nu}\subset N_{R^{\frac 1 2 +\delta_m}}(Z) \cap B^*_R.
\]
\item Angle condition: If $z\in Z\cap B^*_R\cap N_{O(R^{\frac 1 2 +\delta_m})}T_{\theta,\nu} $, then
\[
{\rm Angle}(G(\theta),T_z Z) \lesssim R^{-1/2+\delta_m}.
\]
\end{itemize}

We define
\[
\bT_Z:=\{(\theta,\nu)\,|\,T_{\theta,\nu} \,{\rm is} \,R^{-\frac 1 2 +\delta_m }\text{-tangent to}\, Z\}.
\]

We say that $f$ is concentrated in wave packets from $\bT_Z$ if
\[
 \sum_{(\theta,\nu)\notin \bT_Z} \|f_{\theta,\nu}\|_2 \leq {\rm RapDec}(R)\|f\|_2.
\]
\end{definition}
All functions $f$ that appear in the following context satisfy the assumptions in Theorem \ref{klq}, i.e. ${\rm supp}\widehat{f}\subset B(\xi_0,M^{-1})$ with arbitrary $\xi_0 \in B(0,1)$ and $M\geq 1$.
\begin{proposition} \label{goal}
Fix $k=2$. For $\epsilon>0$, there are small constants $0<\delta\ll \delta_2\ll \delta_1 \ll \delta_0 \ll \epsilon$, and a large constant $\bar{A}=\bar{A}(\epsilon)$ so that the following holds for any $q>\delta^{-1}$ :

(a) Suppose that $Z=Z(P_1,P_2)\subset \ZR^2\times \ZR$ is a transverse complete intersection. Suppose that $f$ is concentrated in wave packets from $T_Z$. Then for any $1\leq A\leq \bar A$, any radius $R\geq 1$, and any $p\geq 2$,
\begin{equation}\label{a}
  \|e^{it\Delta}f\|_{BL_{k,A}^p L^q (B^*_R)}\leq
  \begin{cases}
    R^{\frac{3-p}{2p}+\epsilon}\|f\|_2, &{\rm if}\, KM \geq R^{\frac 1 2 -O(\delta_0)}, \\
    {\rm RapDec}(R)\|f\|_2, &{\rm otherwise}.
  \end{cases}
\end{equation}

(b) Suppose that $Z=Z(P) \subset \ZR^2\times \ZR$ is a transverse complete intersection, where ${\rm Deg}P\leq D_Z$. Suppose that $f$ is concentrated in wave packets from $T_Z$. Then for any $1\leq A\leq \bar{A} $, any radius $R\geq 1$, and any $p>4$,
\begin{equation}\label{b}
  \|e^{it\Delta}f\|_{BL_{k,A}^p L^q (B^*_R)}\leq C(\epsilon,K,D_Z) R^{\epsilon}R^{\delta(\log \bar A -\log A)}R^{\frac{1}{2p}-\frac{1}{4}}\|f\|_2.
\end{equation}

(c) For any $1\leq A\leq \bar{A} $, any radius $R\geq 1$, and any $1\leq p \leq 3.2$,
\begin{equation} \label{c}
  \|e^{it\Delta}f\|_{BL_{k,A}^p L^q (B^*_R)}\leq C(\epsilon,K) R^{\epsilon}R^{\delta(\log \bar A -\log A)}R^{\frac{2}{p}-\frac{5}{8}}\|f\|_2.
\end{equation}
\end{proposition}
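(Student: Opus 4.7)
The plan is to prove parts (a), (b), (c) of the proposition by a joint induction on $R$ and on the broadness parameter $A$: the factor $R^{\delta(\log \bar A - \log A)}$ in (b) and (c) absorbs a logarithmic chain of self-calls, and the hierarchy $\delta \ll \delta_2 \ll \delta_1 \ll \delta_0 \ll \epsilon$ keeps the losses from each layer under control. The three parts are proved in order; (c) reduces to (b) in its tangent sub-case, (b) reduces to (a) in its tangent sub-case, and (a) serves as the base case.

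For (a), the variety $Z = Z(P_1, P_2)$ is one-dimensional, so a wave packet in $\bT_Z$ has both position $c(\nu)$ and direction $G(\theta)$ pinned to a neighborhood of a curve in the space-direction product. Only one direction is essentially available per $(KM)^{-1}$-cap $\tau$ up to angular error $R^{-1/2+\delta_1}$, and combined with the $k=2$ broad condition only $O(A)$ caps can escape all $1$-subspaces $V_a$; summing the contribution of the tangent wave packets in $BL^p_{k,A}L^q$ on $B^*_R$ via the pointwise bound \eqref{eq:psis} yields $R^{(3-p)/(2p)+\epsilon}\|f\|_2$. When $KM < R^{1/2 - O(\delta_0)}$ the caps are so large that the angle condition of tangency cannot hold except for wave packets of negligible mass, producing the $\textup{RapDec}(R)$ alternative.

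For (b) I would apply Theorem~\ref{Thm:PolynPart2} on $Z = Z(P)$ with $D \sim R^{\delta_2}$, splitting $B^*_R$ into $\sim D^3$ cells plus a neighborhood of a lower-dimensional subvariety. Wave-packet pigeonholing then yields three sub-cases: a cellular case (each cell carries only $D^{-3}\|f\|_2^2$ worth of wave packets, and induction on $R$ at the claimed exponent beats the $D^3$ multiplicity); a transverse case (each tube meets the cell walls in $\lesssim D$ arcs, reducing to (c) at a smaller radius with one unit gained in $A$); and a tangent case where the restricted function is concentrated in $\bT_{Z'}$ for a one-dimensional transverse complete intersection $Z'$, reducing to (a) via Lemma~\ref{tci}. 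The exponent $\tfrac{1}{2p} - \tfrac{1}{4}$ is chosen precisely so that the cellular and tangent estimates match for $p > 4$.

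For (c) I would apply Theorem~\ref{Thm:PolynPart} directly in $\mathbb{R}^2 \times \mathbb{R}$ with $D \sim R^{\delta_1}$ and run the same trichotomy. The cellular sub-case closes by induction on $R$ at the exponent $\tfrac{2}{p} - \tfrac{5}{8}$ because for $p \leq 3.2$ it leaves room for the $D^3$ multiplicity. The tangent sub-case invokes (b), whose exponent $\tfrac{1}{2p} - \tfrac{1}{4}$ is strictly smaller than $\tfrac{2}{p} - \tfrac{5}{8}$ throughout $p \leq 3.2$, again leaving room for $D^3$. The transverse sub-case reduces to (c) at radius $R^{1-\delta_1}$, using the $R^{\delta(\log \bar A - \log A)}$ budget to absorb the loss in $A$. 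The main obstacle will be the tangent sub-cases of both (b) and (c): one must perturb the partitioning polynomial so that the resulting variety is actually a transverse complete intersection without destroying the cell decomposition, which forces the delicate combined use of Theorem~\ref{Thm:PolynPart2} and Lemma~\ref{tci}. A secondary difficulty, specific to the maximal-function setting, is that the polynomial partitioning must be performed in the full space-time $\mathbb{R}^3$ and respect the mixed norm $BL^p_{k,A}L^q$ with its $B_K \times I_K$ product structure; the properties in Section~\ref{property} are essential for tracking this norm through each induction step.
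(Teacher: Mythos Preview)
Your overall architecture---prove (a) directly, then (b) by a cellular/algebraic dichotomy with (a) as the tangent base case, then (c) similarly with (b) as the tangent base case---matches the paper. But several of the specific mechanisms are wrong in ways that would break the numerology.

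First, in (a) you have the RapDec reasoning backwards. When $KM < R^{1/2-O(\delta_0)}$, the caps $\tau$ are \emph{coarse} relative to the tangency angle $R^{-1/2+\delta_1}$; hence for every $B_K\times I_K$ near $Z$, all caps carrying tangent wave packets satisfy $\mathrm{Angle}(G(\tau),T_{z_0}Z)\le (KM)^{-1}$, where $T_{z_0}Z$ is a $1$-subspace. Taking $V_1=T_{z_0}Z$ in the definition of $\mu$ excludes all such $\tau$, so the broad norm is RapDec. Tangency does not fail; it forces all directions into a line which the broad norm then discards. Second, in (b) the cellular step is \emph{not} a $3$-dimensional partition with $\sim D^3$ cells. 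Since $f$ is already concentrated on the $2$-variety $Z$, the paper performs a $2$-dimensional partition (after projecting a flat piece of $Z$ to a plane $V$), producing $\sim D^2$ cells with $\|f_i\|_2^2\lesssim D^{-1}\|f\|_2^2$. The recursion then gains $D^{2-p/2}$, which is exactly what forces $p>4$ and yields the exponent $\tfrac{1}{2p}-\tfrac14$. Your claimed $D^{-3}\|f\|_2^2$ per cell is impossible: a line meets $Z(Q)$ at most $D$ times, so pigeonholing over $N$ cells can never give better than $D/N$ mass per cell. Third, the transverse sub-case of (b) recurses to (b) itself (on smaller balls $B_j$, after decomposing $f_{j,\mathrm{trans}}$ into pieces $f_{j,\mathrm{trans},b}$ concentrated on translates $Z+b$), not to (c); the equidistribution bound $\max_b\|f_{j,\mathrm{trans},b}\|_2^2\le R^{O(\delta_2)}(\rho/R)^{1/2}\|f_{j,\mathrm{trans}}\|_2^2$ is what makes this close. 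Fourth, in (c) the cellular gain is $D^{3-p}$, which closes only for $p>3$; one must work at the endpoint $p=3.2$ and recover smaller $p$ by H\"older \eqref{holder}. Likewise the tangent sub-case of (c) cannot invoke (b) at $p=3.2$ since (b) requires $p>4$: the paper applies (b) at $r=4+\delta$ and bridges back via \eqref{holder}. Finally, the paper takes $D=D(\epsilon)$ (respectively $D(\epsilon,D_Z)$) to be a large \emph{constant}, not a power of $R$.
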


\begin{remark}
Theorem \ref{klq} follows immediately from part (c) of Proposition \ref{goal} by taking $A=\bar A$. The remaining part of the paper is devoted to a proof of Proposition \ref{goal}.
\end{remark}

In the rest of this section, we prove part (a) of Proposition \ref{goal}. The dimension of $Z$ is $m=1$.

Note that in the case $KM=\sqrt{R}$, for each $f_\tau$ all associated wave packets are in the same direction, then by a direct computation we have that
\begin{align}
   \|e^{it\Delta}f\|_{BL_{k,A}^p L^q (B^*_R)}^p &\leq \sum_\tau \sum_{B_K\subset B_R}\left[\sum_{I_K\subset [0,R]} \left(\int_{B_K\times I_K}|e^{it\Delta}f_\tau|^p\right)^q\right]^{\frac 1 q} \notag \\
  & \lesssim_K  R^{\frac 1 q} \sum_\tau \int_{B_R} \underset{t\in [0,R]}{{\rm sup}} |e^{it\Delta}f_\tau|^p \lesssim R^{\frac{3-p}{2}+O(\delta)}\|f\|_2^p. \notag
\end{align}
where the last inequality follows from wave packets decomposition, the bound \eqref{eq:psis} and the fact that tubes arising from wave packets with the same direction are essentially disjoint. This proves part (a) in the case $KM\geq R^{\frac 1 2 -O(\delta_0)}$.

Now suppose that $KM< R^{\frac 1 2 -O(\delta_0)}$. For each $B_K\times I_K$ which intersects $N_{R^{\frac 1 2 + \delta_1}}(Z)$ in $B_R^*$, we pick a point $z_0\in Z\cap N_{R^{\frac 12 +\delta_1}}(B_K\times I_K)$. For each $(\theta, \nu)\in \bT_Z$ with $T_{\theta,\nu}\cap (B_K\times I_K)\neq \emptyset$, we have that $z_0 \in Z\cap B_R^* \cap N_{O(R^{\frac 12 +\delta_1})}T_{\theta,\nu}$. Then by definition of $\bT_Z$, we have
$${\rm Angle}(G(\theta), T_{z_0}Z)\lesssim R^{-\frac 12 +\delta_1}.$$ Then for any $\tau$ with such a $\theta$ in it, we have
$${\rm Angle}(G(\tau), T_{z_0}Z)\leq (KM)^{-1}.$$ Since $T_{z_0}Z$ is a $1$-subspace and $A\geq 1$, by definition \eqref{mu} such balls $\tau$ do not contribute to $\mu_{e^{it\Delta}f}(B_K\times I_K)$. Since $f$ is concentrated in wave packets from $T_Z$, this completes the proof of part (a).

\section {Proof of Proposition \ref{goal} Part (b)}

We prove part (b) by induction. The dimension of $Z$ is $m=2$. Note that when $R$ is small, we choose the constant $C(\epsilon, K,D_Z)$ sufficiently large and the result follows. So we can assume that $R$ is large compared to $\epsilon$, $K$ and $D_Z$. For the case $A=1$, we choose $\bar A$ large enough so that $R^{\delta\log\bar A}=R^{100}$ and the result follows. So we can also induct on $A$.

We can assume that $KM< R^{\frac 12 -O(\delta_0)}$. If $KM\geq R^{\frac 12 -O(\delta_0)} $,  the same direct computation as in the proof of \eqref{a} gives us a bound $R^{\frac{3-p}{2p}+\epsilon}\|f\|_2$, which is better than $R^{\frac{1}{2p}-\frac 1 4+\epsilon}\|f\|_2$ when $p>4$.

We let $D=D(\epsilon,D_Z)$ be a function that we will define later. We say that we are in \textbf{algebraic case} if there is a transverse complete intersection $Y^1 \subset Z^2$ of dimension $1$, defined using polynomials of degree $\leq D(\epsilon,D_Z)$, so that
\[
  \|e^{it\Delta}f\|_{BL_{k,A}^p L^q (B^*_R)}\lesssim \|e^{it\Delta}f\|_{BL_{k,A}^p L^q (B^*_R\cap N_{R^{1/2+\delta_2}}(Y))}.
\]
Otherwise we say that we are in \textbf{cellular case}.

\subsection {Cellular case} In cellular case, we will use polynomial partitioning. In the same way as Guth did in \cite{lG16} (Section 8), we first identify a significant piece $N_1$ of $B^*_R\cap N_{R^{\frac 12 +\delta_2}}Z$, where locally $Z$ behaves like a $2$-plane $V$, next apply the polynomial partitioning result -- Theorem \ref{Thm:PolynPart2} to the push-forward of $\|e^{it\Delta}f\|_{BL_{k,A}^p L^q (N_1)}^p$ on $V$, then pull the polynomial on $V$ back to a polynomial on $\ZR^2\times \ZR$, via the orthogonal projection $\pi: \ZR^2\times \ZR \rightarrow V$.
%We say that a ball $B_{R^{\frac 1 2 +\delta_2}}\subset N_{R^{\frac 12 +\delta_2}}(Z)\cap B^*_R$ is regular if, on each connected component of $Z\cap B_{R^{\frac 1 2 +\delta_2}}$, the tangent space is constant up to angle $\frac{1}{100}$.
We have the following partitioning result in cellular case: there exists a non-zero polynomial $Q$ of degree at most D such that $(\mathbb{R}^2\times\mathbb{R})\backslash Z(Q)$ is a union of $\sim D^{2}$ disjoint open sets $O_i$ and for $\sim D^2$ cells $O_i$ we have
  \begin{equation}\label{O_i}
  \|e^{it\Delta}f\|_{BL_{k,A}^p L^q (B^*_R)}^p \lesssim  D^{2}\|e^{it\Delta}f\|_{BL_{k,A}^p L^q (B^*_R\cap O'_i)}^p.
  \end{equation}
 where $O'_i:=O_i\backslash W$ and $W:=N_{R^{1/2+\delta}}Z(Q)$.

For each cell $O'_i$, we set
\[
 \bT_i:=\{(\theta,\nu)\in\bT\,:\,T_{\theta,\nu} \cap O'_i \neq \emptyset\} \,.
\]
Given the function $f$, we define
\[
  f_i =  \sum_{(\theta,\nu)\in \bT_i} f_{\theta,\nu} \,.
\]
From \eqref{eq:psis}, it follows that on $O_i'$,
\[
 e^{it\Delta}f(x)\sim e^{it\Delta}f_i(x) \,.
\]
Due to the fundamental theorem of Algebra, we have a simple but important geometric observation:
\begin{lemma}\label{lem-G0}
  $\#\{i\,:\,(\theta,\nu) \in \bT_i\}\leq D+1$ for any $(\theta,\nu)\in\bT$.
\end{lemma}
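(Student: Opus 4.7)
The plan is to apply the fundamental theorem of algebra to the restriction of the partitioning polynomial $Q$ to the central line of the tube $T_{\theta,\nu}$. Recall that $T_{\theta,\nu}$ is the $R^{1/2+\delta}$-neighborhood (in the spatial variables) of the line segment
\[
  L_{\theta,\nu}=\{(c(\nu)-2tc(\theta),t):0\leq t\leq R\}\subset \ZR^2\times\ZR,
\]
travelling in direction $G_0(\theta)=(-2c(\theta),1)$. The restriction $Q|_{L_{\theta,\nu}}$ is a polynomial in the single real variable $t$ of degree at most $\deg Q\leq D$, hence has at most $D$ real zeros in $[0,R]$. These zeros partition $L_{\theta,\nu}$ into at most $D+1$ sub-arcs, on each of which $Q$ has constant sign, so each sub-arc lies in a single cell $O_i$.

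The main step is to show that whenever $T_{\theta,\nu}\cap O'_i\neq\emptyset$, at least one sub-arc of $L_{\theta,\nu}$ lies in $O_i$. Given such an $i$, pick $p\in T_{\theta,\nu}\cap O'_i$. By definition $O'_i=O_i\setminus W$ with $W=N_{R^{1/2+\delta}}Z(Q)$, so $\mathrm{dist}(p,Z(Q))\geq R^{1/2+\delta}$. Let $p'$ be the orthogonal projection of $p$ onto $L_{\theta,\nu}$; since $p$ lies in the tube, $|p-p'|\leq R^{1/2+\delta}$. Hence the closed ball $\overline B(p,R^{1/2+\delta})$ is disjoint from $Z(Q)$, and being convex it is connected, so both $p$ and $p'$ belong to the same connected component of $\ZR^3\setminus Z(Q)$, which is $O_i$. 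Therefore $p'\in L_{\theta,\nu}\cap O_i$, and the sub-arc of $L_{\theta,\nu}$ containing $p'$ lies in $O_i$.

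Distinct cells $O_i$ receive distinct sub-arcs (since the cells are pairwise disjoint), so the assignment $i\mapsto\,$ (a sub-arc of $L_{\theta,\nu}$ contained in $O_i$) is injective into the collection of at most $D+1$ sub-arcs of $L_{\theta,\nu}$. This yields $\#\{i:(\theta,\nu)\in\bT_i\}\leq D+1$, as required. The only subtlety in the argument is that the tube radius $R^{1/2+\delta}$ must match the wall thickness $R^{1/2+\delta}$ exactly, as this is precisely what makes the closed ball $\overline B(p,R^{1/2+\delta})$ avoid $Z(Q)$ and thus force $p'$ into the same cell as $p$; this is the very reason why the wall $W$ in the polynomial partitioning step is taken with the same thickness as the tube radius.
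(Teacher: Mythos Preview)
Your proof is correct and follows essentially the same approach as the paper's: restrict the partitioning polynomial to the central line, use the fundamental theorem of algebra to bound the number of sign changes by $D$, and conclude that the line meets at most $D+1$ cells. The paper states in one line that ``if $(\theta,\nu)\in\bT_i$, then the central line of $T_{\theta,\nu}$ must cross $O_i$''; you have simply made this implication explicit by using that $O'_i$ lies outside the wall $W=N_{R^{1/2+\delta}}Z(Q)$ of the same thickness as the tube radius, which is exactly the point the paper is tacitly invoking.
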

\begin{proof}
 If $(\theta,\nu)\in \bT_i$, then the central line of $T_{\theta,\nu}$ must cross $O_i$. On the other hand, a line can intersect $Z(Q)$
at most $D$ times, hence can cross at most $D+1$ cells $O_i$.
\end{proof}

By Lemma \ref{lem-G0},
\[
  \sum_{i} \|f_{i}\|_2^2
\lesssim (D+1) \sum_{\theta,\nu}\| f_{\theta,\nu}\|_2^2 \lesssim D\|f\|_2^2\,.
\]
Henceforth, by pigeonhole principle, there exists $O'_i$ satisfying \eqref{O_i} such that
\[
  \|f_i\|_2^2 \lesssim D^{-1}\|f\|_2^2.
\]
Now we apply \eqref{b} to this special $f_i$ at radius $\frac R 2$,
\begin{align}
  &\|e^{it\Delta}f\|_{BL_{k,A}^p L^q (B^*_R)}^p \lesssim D^{2}\|e^{it\Delta}f\|_{BL_{k,A}^p L^q (B^*_R\cap O'_i)}^p \sim  D^{2}\|e^{it\Delta}f_i\|_{BL_{k,A}^p L^q (B^*_R)}^p \notag \\
   \lesssim & D^2 \left[C(\epsilon,K,D_Z) R^{\epsilon}R^{\delta(\log \bar A -\log A)}R^{\frac{1}{2p}-\frac{1}{4}}\|f_i\|_2\right]^p \notag \\
   \lesssim & D^{2-\frac p 2} \left[C(\epsilon,K,D_Z) R^{\epsilon}R^{\delta(\log \bar A -\log A)}R^{\frac{1}{2p}-\frac{1}{4}}\|f\|_2\right]^p \notag
\end{align}

We choose $D$ large enough so that, for $p>4$ we have $D^{2-\frac p 2} \ll 1$. Therefore, the cellular case is done by induction.

\subsection{Algebraic tangential case}

In algebraic case, we pick $Y^1\subset Z^2$ of dimension $1$, defined using polynomials of degree $\leq D=D(\epsilon, D_Z)$, so that
\[
  \|e^{it\Delta}f\|_{BL_{k,A}^p L^q (B^*_R)}\lesssim \|e^{it\Delta}f\|_{BL_{k,A}^p L^q (B^*_R\cap N_{R^{1/2+\delta_2}}(Y))}.
\]
Then we decompose $B^*_R$ into smaller balls $B_j$ of radius $\rho$, where $\rho^{\frac 12+\delta_1}=R^{\frac 1 2 +\delta_2}$. Recall that $\delta_2\ll \delta_1$, so $\rho \sim R^{1-O(\delta_1)}$. For each $j$, we define $f_j:=\sum_{(\theta,\nu)\in \bT_j}f_{\theta,\nu}$, where
\[
  \bT_j:= \left\{ (\theta,\nu) \in \bT_Z\,|\,T_{\theta,\nu}\cap N_{R^{\frac 12 +\delta_2}}(Y)\cap B_j \neq \emptyset \right\}.
\]
On $B_j $, $e^{it\Delta}f_j \sim e^{it\Delta}f$. Therefore
\[
  \|e^{it\Delta}f\|_{BL_{k,A}^p L^q (B^*_R)}^p\lesssim \sum_{j}\|e^{it\Delta}f_j\|_{BL_{k,A}^p L^q (B_j)}^p\,.
\]
In order to induct on the dimension of the variety, we further divide $\bT_j$ into tubes that are tangential to $Y$ and tubes that are transverse to $Y$. We say that $T_{\theta,\nu}$ is \textbf{tangential} to $Y$ in $B_j$ if the following two conditions hold:
\begin{itemize}
\item Distance condition:
\[
  T_{\theta,\nu}\cap 2B_j \subset N_{R^{\frac 1 2 +\delta_2}}(Y) \cap 2 B_j = N_{\rho^{\frac 1 2 +\delta_1}}(Y) \cap 2 B_j.
\]
\item Angle condition:

If $y\in Y\cap 2B_j \cap N_{O(R^{\frac 1 2 +\delta_2})}T_{\theta,\nu} =Y\cap 2B_j \cap N_{O(\rho^{\frac 1 2 +\delta_1})}T_{\theta,\nu}$, then
\[
{\rm Angle}(G(\theta),T_y Y) \lesssim \rho^{-1/2+\delta_1}.
\]
\end{itemize}
We define the tangential wave packets by
\[
  \bT_{j,{\rm tang}}:=\left\{(\theta,\nu)\in \bT_j\,|\,T_{\theta,\nu} \,\text{is tangent to }\, Y \,{\rm in}\, B_j\right\}.
\]
And we define the transverse wave packets by
\[
  \bT_{j,{\rm trans}}:=\bT_j \backslash \bT_{j,{\rm tang}}.
\]
We define $f_{j,{\rm tang}}:=\sum_{(\theta,\nu)\in \bT_{j,{\rm tang}}}f_{\theta,\nu}$ and $f_{j,{\rm trans}}:=\sum_{(\theta,\nu)\in \bT_{j,{\rm trans}}}f_{\theta,\nu}$, so
\[
  f_j=f_{j,{\rm tang}}+f_{j,{\rm trans}}.
\]
Therefore we bound $\|e^{it\Delta}f\|_{BL_{k,A}^p L^q (B^*_R)}^p$ by
\[
   \sum_{j}\|e^{it\Delta}f_{j,{\rm tang}}\|_{BL_{k,A/2}^p L^q (B_j)}^p+\sum_{j}\|e^{it\Delta}f_{j,{\rm trans}}\|_{BL_{k,A/2}^p L^q (B_j)}^p\,.
\]
We will bound the tangential term by induction on the dimension, and bound the transverse term by induction on the radius $R$. In order to apply induction on the ball $B_j$, we need to redo the wave packets decomposition at a scale $\rho$ instead of $R$. See Section 7 in \cite{lG16} for details.

First suppose that the tangential term dominates. By the definition of $\bT_{j,{\rm tang}}$, the new wave packets for $f_{j,tang}$ are $\rho^{-\frac 12 +\delta_1}$-tangent to $Y$ in $B_j$, so we can apply \eqref{a} to $f_{j,{\rm tang}}$:
\[
 \sum_{j}\|e^{it\Delta}f_{j,{\rm tang}}\|_{BL_{k,A/2}^p L^q (B_j)}^p \leq {\rm RapDec} (R) \|f\|_2^p,
\]
note that $KM\leq R^{\frac 12 -O(\delta_0)}=\rho^{(1-O(\delta_1))(\frac 12 -O(\delta_0))}=\rho^{\frac 12 -O(\delta_0)}$. So the induction on algebraic tangential term closes.

\subsection{Algebraic transverse case}
In this subsection, we estimate $$\sum_{j}\|e^{it\Delta}f_{j,{\rm trans}}\|_{BL_{k,A/2}^p L^q (B_j)}^p$$ by induction on the radius $R$.

We will use the following geometric lemma from \cite{lG16}, which is about how a tube intersects a variety transversely.
\begin{lemma}\label{transverse} \emph{[Guth]}\,\,
Suppose that $T$ is a tube of radius $r$ and with direction $\upsilon(T)$. Suppose that $Z=Z(P_1,\cdots,P_{n+1-m})\subset \ZR^n\times\ZR$ is a transverse complete intersection defined by polynomials of degree at most $D$. Define $$Z_{>\alpha}:=\{z\in Z\,|\,{\rm Angle}(\upsilon(T),T_zZ)>\alpha\}.$$ Then for any $\alpha>0$, $Z_{>\alpha}\cap T$ is contained in $\lesssim D^{n+1}$ balls of radius $\lesssim r\alpha^{-1}$.
\end{lemma}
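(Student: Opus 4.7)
The plan is to prove Lemma \ref{transverse} by separately bounding (i) the number of connected components of $Z_{>\alpha}\cap T$ and (ii) the diameter of each such component, then combining the two estimates.

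For the component count, I would first translate the set $Z_{>\alpha}\cap T$ into a semi-algebraic set of controlled complexity. Since $Z$ is a transverse complete intersection, at each smooth point $z\in Z$ the normal space $(T_zZ)^\perp$ is spanned by $\nabla P_1(z),\ldots,\nabla P_{n+1-m}(z)$. The angle condition $\mathrm{Angle}(\upsilon(T),T_zZ)>\alpha$ is therefore equivalent to the projection of $\upsilon(T)$ onto $\mathrm{span}\{\nabla P_i(z)\}$ having magnitude $>\sin\alpha$, which, after clearing the Gram determinant denominator, becomes a polynomial inequality in $z$ of degree $\leq O(D)$. Together with the $O(D)$-degree defining equations for $Z$ and the degree-$2$ tube condition (distance to the axis less than $r$), the set $Z_{>\alpha}\cap T$ is semi-algebraic of total complexity $O(D)$ inside $\ZR^{n+1}$. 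By the Milnor–Thom–Oleinik–Petrovsky bound, its number of connected components is $\lesssim D^{n+1}$.

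For the diameter bound on a connected component $C$, I would show that $C$ behaves like a Lipschitz graph over its projection $\pi(C)$ to the cross-section $\upsilon(T)^\perp$. The key local computation is that for any $v\in T_zZ$ at $z\in Z_{>\alpha}$, the relation $|v|^2=|v\cdot\upsilon(T)|^2+|\pi(v)|^2$ together with $|v\cdot\upsilon(T)|\leq|v|\cos\alpha$ gives $|\pi(v)|\geq|v|\sin\alpha$. Thus $d\pi|_{T_zZ}$ is injective with inverse operator norm $\leq 1/\sin\alpha\lesssim 1/\alpha$, so locally $C$ is a graph over $\upsilon(T)^\perp$ with slope in the $\upsilon(T)$-direction bounded by $\cot\alpha\lesssim 1/\alpha$. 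Since $\pi(C)\subset B^n(0,r)$ has diameter $\leq 2r$, the extent of $C$ in the axis direction is $\lesssim r/\alpha$, and combining with the cross-sectional bound $\leq 2r$ yields $\mathrm{diam}(C)\lesssim r/\alpha$.

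The main obstacle is promoting this local graph picture to a genuinely global diameter bound on each connected component. For hypersurface components ($m=n$) the argument above is essentially already a global statement via monotone graph reasoning over the convex cross-section. For lower-dimensional components the projection $\pi|_C$ is only an immersion into $B^n(0,r)$, so one has to argue that distinct "sheets" over $\pi(C)$ either stay close in the $\upsilon(T)$ direction or can be separated off as their own connected components (which would be absorbed into the $\lesssim D^{n+1}$ count). One clean way to handle this is to cover $C$ by finitely many charts in which $\pi$ is a diffeomorphism, travel along a path in $C$ between any two points, and observe that the $\upsilon(T)$-coordinate can vary by at most $\cot\alpha$ times the length of the projected path in $B^n(0,r)$; combined with a Milnor–Thom bound on the number of sign changes of the $\upsilon(T)$-derivative along the path this yields the claimed diameter. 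Putting the two steps together gives the desired conclusion: $Z_{>\alpha}\cap T$ is covered by $\lesssim D^{n+1}$ balls of radius $\lesssim r/\alpha$.
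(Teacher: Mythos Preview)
The paper does not give its own proof of this lemma; it is stated with attribution ``[Guth]'' and simply quoted from \cite{lG16}. So there is no paper proof to compare against, and the assessment below is of your argument on its own merits.

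Your two–step strategy is natural, and step (i) is essentially fine: writing the angle condition as a polynomial inequality of degree $O_{n,m}(D)$ and invoking Milnor--Thom indeed bounds the number of connected components of $Z_{>\alpha}\cap T$ by $\lesssim D^{n+1}$.

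The genuine gap is in step (ii). The local computation $|\pi(v)|\geq |v|\sin\alpha$ for $v\in T_zZ$ is correct, but it only says that $\pi|_C$ is an immersion with controlled differential; it does not bound the diameter of a connected component $C$. Your proposed fix --- ``integrate $dh$ along a path in $C$ and control it by the length of the projected path, then use a Milnor--Thom bound on the number of sign changes of $h'$ along the path'' --- does not work as stated. First, a path in $C$ is not an algebraic object, so Milnor--Thom does not apply to it. Second, and more importantly, even on a piece where $h$ is monotone, the projected path can be arbitrarily long inside the disk $B^n(0,r)$ (a curve contained in a small disk can have any length). What prevents this from happening for $C\subset Z$ is precisely the algebraicity of $Z$, but you have not invoked it anywhere in the diameter step --- your argument at that point would apply verbatim to a smooth (non-algebraic) helix winding many times inside $T$, which is a counterexample to the per-component diameter bound in the smooth category.

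To close the gap one must feed the degree bound back into the diameter estimate, e.g.\ by bounding the arc length of (the projection of) an algebraic curve of degree $\leq D$ inside a ball of radius $r$ via integral geometry, or by showing that $\pi|_C$ is actually a diffeomorphism onto its image using that $\partial C$ lies in the degree-$O(D)$ set $\{\text{angle}=\alpha\}\cup\partial T$. Either route requires a further polynomial-counting input that is missing from your sketch; this is in fact where most of the content of Guth's proof in \cite{lG16} lies.
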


For each $f_{j,{\rm trans}}$ we consider the associated new wave packets $(\tilde{\theta},\tilde{\nu})$ at scale $\rho$. The new tubes $T_{\tilde{\theta},\tilde{\nu}}$ are of radius $\rho^{\frac 12 +\delta}$ and of length $\rho$. The new tubes are no longer $\rho^{-\frac 1 2 +\delta_2}$-tangent to $Z$ in $B_j$, they satisfy the angle condition but not the distance condition, more precisely, the new tubes are contained in $N_{R^{\frac 1 2 +\delta_2}}(Z)\cap B_j$, but not necessarily contained in $N_{\rho^{\frac 1 2 +\delta_2}}(Z)\cap B_j$. So we cover $N_{R^{\frac 1 2 +\delta_2}}(Z)\cap B_j$ with disjoint translates of $N_{\rho^{\frac 1 2 +\delta_2}}(Z)\cap B_j$. By the angle condition, it turns out that each new tube lies in one of these translates. For any $b\in B^*_{R^{\frac 12 +\delta_m}}$, define
\[
  \tilde{\bT}_{Z+b}:=\{(\tilde{\theta},\tilde{\nu})\,|\, T_{\tilde{\theta},\tilde{\nu}} \, \text{is}\, \rho^{-\frac 1 2 +\delta_2}\text{-tangent to}\, Z+b \,\text{in}\, B_j\}.
\]
We choose a random set of vectors $b\in B^*_{R^{\frac12 +\delta_2}}$, and using the new wave packets from $\tilde{\bT}_{Z+b}$ we get functions $f_{j,{\rm trans},b}$ satisfying the following properties (see Section 7 and Section 8 in \cite{lG16}):

\begin{itemize}
 \item $|e^{it\Delta}f_{j,{\rm trans},b}(x)|\sim \chi_{N_{\rho^{\frac 1 2 +\delta_2}}(Z+b)}(x,t)|e^{it\Delta}f_{j,{\rm trans}}(x)|\,,$

 \item $\|e^{it\Delta}f_{j,{\rm trans}}\|_{BL_{k,A/2}^p L^q (B_j)}^p \lesssim (\log R) \sum_b \|e^{it\Delta}f_{j,{\rm trans},b}\|_{BL_{k,A/2}^p L^q (B_j)}^p \,,$

 \item  $\sum_j \sum_b \|f_{j,{\rm trans},b}\|_2^2 \lesssim \sum_j \|f_{j,{\rm trans}}\|_2^2 \lesssim_D \|f\|_2^2\,,$ where the second inequality follows from Lemma \ref{transverse}.

 \item $\underset{b}{{\rm max}} \|f_{j,{\rm trans},b}\|_2^2 \leq R^{O(\delta_2)}\left(\frac{R^{1/2}}{\rho^{1/2}}\right)^{-1}\|f_{j,{\rm trans}}\|_2^2 \,.$

\end{itemize}
Now we have
\[
  \sum_j \|e^{it\Delta}f_{j,{\rm trans}}\|_{BL_{k,A/2}^p L^q (B_j)}^p \lesssim (\log R) \sum_j \sum_b \|e^{it\Delta}f_{j,{\rm trans},b}\|_{BL_{k,A/2}^p L^q (B_j)}^p\,.
\]
We use \eqref{b} to bound $\|e^{it\Delta}f_{j,{\rm trans},b}\|_{BL_{k,A/2}^p L^q (B_j)}$ by
\[
  C(\epsilon,K,D_Z) \rho^{\epsilon}\rho^{\delta(\log \bar A -\log \frac A 2)}\rho^{\frac{1}{2p}-\frac{1}{4}}\|f_{j,{\rm trans },b}\|_2\,.
\]
We write $\sum_j\sum_b \|f_{j,{\rm trans },b}\|_2^p \leq \sum_j\sum_b \|f_{j,{\rm trans },b}\|_2^2 \cdot \underset{b}{{\rm max}} \|f_{j,{\rm trans},b}\|_2^{p-2}$, then using the above properties we get
\begin{align}
  &\sum_j \|e^{it\Delta}f_{j,{\rm trans}}\|_{BL_{k,A/2}^p L^q (B_j)}^p \notag\\
  \lesssim & R^{O(\delta_2)}\left(\frac R\rho\right)^{-\epsilon p} \left[ C(\epsilon,K,D_Z) R^{\epsilon}R^{\delta(\log \bar A -\log A)}R^{\frac{1}{2p}-\frac{1}{4}}\|f\|_2 \right]^p \,.\notag
\end{align}
Since $\frac R \rho=R^{O(\delta_1)}$, by choosing $\delta_2 \ll \epsilon \delta_1$ the induction closes for the algebraic transverse term. And this completes the proof of part (b) of Proposition \ref{goal}.

\section{Proof of Proposition \ref{goal} Part (c)}
To prove part (c), we only need to focus on the endpoint $p=3.2$. Once we prove part (c) for $p=3.2$,  then the whole range in part (c) will follow from H\"older's inequality \eqref{holder}. We fix $p=3.2$.

The proof of part (c) is similar to the proof of part (b). Again we prove part (c) by induction on the dimension, the radius $R$ and on $A$.

We can assume that $KM\leq R^{\frac 12 -O(\delta_0)}$. If $KM\geq R^{\frac 12 -O(\delta_0)} $,  the same direct computation as in the proof of \eqref{a} can give a bound $R^{\frac{3-p}{2p}+\epsilon}\|f\|_2$, which is better than $R^{\frac{2}{p}-\frac 5 8+\epsilon}\|f\|_2$ when $p<4$.

We let $D=D(\epsilon)$ be a function that we will define later. We say that we are in \textbf{algebraic case} if there is a transverse complete intersection $Z$ of dimension $2$, defined using a polynomial of degree $\leq D$, so that
\[
  \|e^{it\Delta}f\|_{BL_{k,A}^p L^q (B^*_R)}\lesssim \|e^{it\Delta}f\|_{BL_{k,A}^p L^q (B^*_R\cap N_{R^{1/2+\delta}}(Z))}.
\]
Otherwise we say that we are in \textbf{cellular case}.

\subsection {Cellular case} In cellular case, we will use the polynomial partitioning result. By Theorem \ref{Thm:PolynPart2}, there exists a non-zero polynomial $P=\prod_l Q_l$ of degree at most D such that $(\mathbb{R}^2\times\mathbb{R})\backslash Z(P)$ is a union of $\sim D^{3}$ disjoint open sets $O_i$ and for each cell $O_i$ we have
  \begin{equation}
  \|e^{it\Delta}f\|_{BL_{k,A}^p L^q (B^*_R)}^p \lesssim  D^{3}\|e^{it\Delta}f\|_{BL_{k,A}^p L^q (B^*_R\cap O_i)}^p.
  \end{equation}
Moreover, by Lemma \ref{tci} we can guarantee that for each $l$, $Z(Q_l)$ is a transverse complete intersection.

Next we define
\[
W:=N_{R^{1/2+\delta}}Z(P),\,\, O'_i:=O_i\backslash W\,.
\]
Since $W\subset \bigcup_l N_{R^{1/2+\delta}}Z(Q_l)$ and we are in cellular case, the contribution from $W$ is negligible. Hence for $\sim D^3$ cells $O'_i$, we have
\begin{equation}\label{O_i2}
  \|e^{it\Delta}f\|_{BL_{k,A}^p L^q (B^*_R)}^p \lesssim  D^{3}\|e^{it\Delta}f\|_{BL_{k,A}^p L^q (B^*_R\cap O'_i)}^p.
\end{equation}

For each cell $O'_i$, we set
\[
 \bT_i=\{(\theta,\nu)\in\bT\,:\,T_{\theta,\nu} \cap O'_i \neq \emptyset\} \,.
\]
Given the function $f$, we define
\[
  f_i :=  \sum_{(\theta,\nu)\in \bT_i} f_{\theta,\nu} \,.
\]
From \eqref{eq:psis}, it follows that on $O_i'$,
\[
 e^{it\Delta}f(x)\sim e^{it\Delta}f_i(x) \,.
\]
Again by the fundamental theorem of Algebra, we have
\[
 \#\{i\,:\,(\theta,\nu) \in \bT_i\}\leq D+1 ,\,\,\text{for any}\, (\theta,\nu)\in\bT.
\]
Hence
\[
  \sum_{i} \|f_{i}\|_2^2
\lesssim (D+1) \sum_{\theta,\nu}\| f_{\theta,\nu}\|_2^2 \lesssim D\|f\|_2^2\,.
\]
Henceforth, by pigeonhole principle, there exists $O'_i$ satisfying \eqref{O_i2} such that
\[
  \|f_i\|_2^2 \lesssim D^{-2}\|f\|_2^2.
\]
Now we apply \eqref{c} to this special $f_i$ at radius $\frac R 2$,
\begin{align}
  &\|e^{it\Delta}f\|_{BL_{k,A}^p L^q (B^*_R)}^p \lesssim D^{3}\|e^{it\Delta}f\|_{BL_{k,A}^p L^q (B^*_R\cap O'_i)}^p \sim  D^{3}\|e^{it\Delta}f_i\|_{BL_{k,A}^p L^q (B^*_R)}^p \notag \\
   \lesssim & D^3 \left[C(\epsilon,K) R^{\epsilon}R^{\delta(\log \bar A -\log A)}R^{\frac{2}{p}-\frac{5}{8}}\|f_i\|_2\right]^p \notag \\
   \lesssim & D^{3-p} \left[C(\epsilon,K) R^{\epsilon}R^{\delta(\log \bar A -\log A)}R^{\frac{2}{p}-\frac{5}{8}}\|f\|_2\right]^p \notag
\end{align}

We choose $D$ large enough so that, for $p>3$ we have $D^{3-p} \ll 1$. Then the cellular case is done by induction.

\subsection{Algebraic tangential case}

In algebraic case, we pick a transverse complete intersection $Z$ of dimension $2$, defined using a polynomial of degree $\leq D=D(\epsilon)$, so that
\[
  \|e^{it\Delta}f\|_{BL_{k,A}^p L^q (B^*_R)}\lesssim \|e^{it\Delta}f\|_{BL_{k,A}^p L^q (B^*_R\cap N_{R^{1/2+\delta}}(Z))}.
\]
Then we decompose $B^*_R$ into smaller balls $B_j$ of radius $\rho$, where $\rho^{\frac 12+\delta_2}=R^{\frac 1 2 +\delta}$. Recall that $\delta\ll \delta_2$, so $\rho \sim R^{1-O(\delta_2)}$. For each $j$, we define $f_j:=\sum_{(\theta,\nu)\in \bT_j}f_{\theta,\nu}$, where
\[
  \bT_j:= \left\{ (\theta,\nu) \in \bT_Z\,|\,T_{\theta,\nu}\cap N_{R^{\frac 12 +\delta}}(Z)\cap B_j \neq \emptyset \right\}.
\]
On $B_j $, $e^{it\Delta}f_j \sim e^{it\Delta}f$. Therefore
\[
  \|e^{it\Delta}f\|_{BL_{k,A}^p L^q (B^*_R)}^p\lesssim \sum_{j}\|e^{it\Delta}f_j\|_{BL_{k,A}^p L^q (B_j)}^p\,.
\]
In order induct on the dimension of the variety, we further divide $\bT_j$ into tubes that are tangential to $Z$ and tubes that are transverse to $Z$. We say that $T_{\theta,\nu}$ is \textbf{tangential} to $Z$ in $B_j$ if the following two conditions hold:
\begin{itemize}
\item Distance condition:
\[
  T_{\theta,\nu}\cap 2B_j \subset N_{R^{\frac 1 2 +\delta}}(Z) \cap 2 B_j = N_{\rho^{\frac 1 2 +\delta_2}}(Z) \cap 2 B_j.
\]
\item Angle condition:

If $z\in Z\cap 2B_j \cap N_{O(R^{\frac 1 2 +\delta})}T_{\theta,\nu} =Z\cap 2B_j \cap N_{O(\rho^{\frac 1 2 +\delta_2})}T_{\theta,\nu}$, then
\[
{\rm Angle}(G(\theta),T_z Z) \lesssim \rho^{-1/2+\delta_2}.
\]
\end{itemize}
We define the tangential wave packets by
\[
  \bT_{j,{\rm tang}}:=\left\{(\theta,\nu)\in \bT_j\,|\,T_{\theta,\nu} \,\text{is tangent to }\, Z \,{\rm in}\, B_j\right\}.
\]
And we define the transverse wave packets by
\[
  \bT_{j,{\rm trans}}:=\bT_j \backslash \bT_{j,{\rm tang}}.
\]
We define $f_{j,{\rm tang}}:=\sum_{(\theta,\nu)\in \bT_{j,{\rm tang}}}f_{\theta,\nu}$ and $f_{j,{\rm trans}}:=\sum_{(\theta,\nu)\in \bT_{j,{\rm trans}}}f_{\theta,\nu}$, so
\[
  f_j=f_{j,{\rm tang}}+f_{j,{\rm trans}}.
\]
Therefore we bound $\|e^{it\Delta}f\|_{BL_{k,A}^p L^q (B^*_R)}^p$ by
\[
   \sum_{j}\|e^{it\Delta}f_{j,{\rm tang}}\|_{BL_{k,A/2}^p L^q (B_j)}^p+\sum_{j}\|e^{it\Delta}f_{j,{\rm trans}}\|_{BL_{k,A/2}^p L^q (B_j)}^p\,.
\]
Again we will bound the tangential term by induction on the dimension, and bound the transverse term by induction on the radius $R$. In order to apply induction on the ball $B_j$, we also need to redo the wave packets decomposition at a scale $\rho$ instead of $R$.

First suppose that the tangential term dominates. By the definition of $\bT_{j,{\rm tang}}$, the new wave packets for $f_{j,tang}$ are $\rho^{-\frac 12 +\delta_2}$-tangent to $Z$ in $B_j$, so we can apply \eqref{b} to $f_{j,{\rm tang}}$:
\[
 \|e^{it\Delta}f_{j,{\rm tang}}\|_{BL_{k,A/2}^r L^q (B_j)} \leq C(\epsilon/2,K,D) \rho^{\epsilon/2}\rho^{\delta(\log \bar A -\log (A/2))}\rho^{\frac{1}{2r}-\frac{1}{4}}\|f\|_2,
\]
for $r>4$. For $p= 3.2$, by H\"lder's inequality \eqref{holder} and by taking $r=4+\delta$ we have
\[
  \|e^{it\Delta}f_{j,{\rm tang}}\|_{BL_{k,A/2}^p L^q (B_j)} \leq C(\epsilon/2,K,D) \rho^{\epsilon/2}\rho^{\delta(\log \bar A -\log (A/2))}\rho^{O(\delta)}\|f\|_2
\]
Hence we get
\begin{align}
   &\sum_{j}\|e^{it\Delta}f_{j,{\rm tang}}\|_{BL_{k,A/2}^p L^q (B_j)}^p \notag \\
   \lesssim & \left[R^{O(\delta_2)}C(\epsilon/2,K,D) R^{\epsilon/2}R^{\delta(\log\bar A-\log A)}\|f\|_2\right]^p\,.
\end{align}
Since $R^{O(\delta_2)}R^{\epsilon/2} \leq R^{\epsilon}$, by setting $C(\epsilon,K)\lesssim C(\epsilon/2,K,D)$ the induction on algebraic tangential term closes.

\subsection{Algebraic transverse case}
In this subsection, we estimate $$\sum_{j}\|e^{it\Delta}f_{j,{\rm trans}}\|_{BL_{k,A/2}^p L^q (B_j)}^p$$ by induction on the radius $R$, where $p=3.2$.

By induction on the radius $R$, we apply \eqref{c} to bound $\|e^{it\Delta}f_{j,{\rm trans}}\|_{BL_{k,A/2}^p L^q (B_j)}$ by
\[
  C(\epsilon,K) \rho^{\epsilon}\rho^{\delta(\log \bar A -\log \frac A 2)}\|f_{j,{\rm trans }}\|_2\,.
\]
Let $\alpha=\rho^{-\frac 12 +\delta_2} $. Note that if $(\theta,\nu)\in \bT_{j,{\rm trans}}$, then $CT_{\theta,\nu}\cap Z_{>\alpha}\cap 2B_j \neq \emptyset$. By Lemma \ref{transverse} (taking radius $ r=R^{\frac 12 +\delta}=\rho^{\frac 12 +\delta_2}$, , so $r\alpha^{-1}=\rho$), we have
\[
   \#\{j\,:\,(\theta,\nu) \in \bT_{j,{\rm trans}}\}\lesssim D^{3} ,\,\,\text{for any}\, (\theta,\nu)\in\bT.
\]
Hence $\sum_j \|f_{j,{\rm trans }}\|_2^2 \lesssim_D \|f\|_2^2$\,, and
\begin{align}
  &\sum_j \|e^{it\Delta}f_{j,{\rm trans}}\|_{BL_{k,A/2}^p L^q (B_j)}^p \notag\\
  \lesssim_D \,& R^{O(\delta)}\left(\frac R\rho\right)^{-\epsilon p} \left[ C(\epsilon,K) R^{\epsilon}R^{\delta(\log \bar A -\log A)}\|f\|_2 \right]^p \,.\notag
\end{align}
Since $\frac R \rho=R^{O(\delta_2)}$, by choosing $\delta \ll \epsilon \delta_2$ the induction closes for the algebraic transverse term. And this completes the proof of part (c) of Proposition \ref{goal}.

\begin{acknowledgement}
The authors wish to express their indebtedness to Larry Guth, who invited them to MIT, 
talked about mathematics with them, and sent his preprint to them.
\end{acknowledgement}

\end{document}